\documentclass[a4paper,10pt]{article}
\usepackage[utf8]{inputenc}

\usepackage{cmap}
\usepackage[T1]{fontenc}

\usepackage{tikz}

\usepackage{subcaption}

\usepackage{tocloft}

\usepackage[english]{babel}
\usepackage{amsfonts,stmaryrd}
\usepackage{amssymb,amsthm}
\usepackage{xcolor,graphicx}
\usepackage{marginnote}
\usepackage{amsmath}
\usepackage{a4wide}
\usepackage[affil-it]{authblk}

\usepackage{epstopdf}
\newcommand{\doi}[1]{\href{http://dx.doi.org/#1}{doi:#1}}

\usepackage{titlesec}
\titlelabel{\thetitle.\quad}

\usepackage[labelsep=period]{caption}

\usepackage{enumitem}

\usepackage{crossreftools}
\makeatletter
\newcommand{\optionaldesc}[2]{%
	\phantomsection
	#1\protected@edef\@currentlabel{#1}\label{#2}%
}
\makeatother

\numberwithin{equation}{section}

\let\OLDthebibliography\thebibliography
\renewcommand\thebibliography[1]{
	\OLDthebibliography{#1}
	\setlength{\parskip}{1pt}
	\setlength{\itemsep}{1pt plus 0.3ex}
}

\usepackage[colorlinks=true,linktocpage,pdfpagelabels,
bookmarksnumbered,bookmarksopen]{hyperref}
\definecolor{ForestGreen}{rgb}{0.1,0.6,0.05}
\definecolor{EgyptBlue}{rgb}{0.063,0.1,0.6}
\hypersetup{
	colorlinks=true,
	linkcolor=EgyptBlue,         
	citecolor=ForestGreen,
	urlcolor=olive
}

\usepackage[hyperpageref]{backref}

\allowdisplaybreaks
\usepackage{mathtools}
\mathtoolsset{showonlyrefs}

\usepackage{orcidlink}

\usepackage{accents}

\def\Wo{W_0^{1,p}(\Omega)}

\newcommand{\boxbarl}{\scalebox{0.9}{$\boxbar$}}
\newcommand{\boxbslashl}{\scalebox{0.9}{$\boxbslash$}}

\newtheorem{theorem}{Theorem}[section]
\newtheorem{lemma}[theorem]{Lemma}
\newtheorem{proposition}[theorem]{Proposition}
\newtheorem{corollary}[theorem]{Corollary}
\theoremstyle{definition}
\newtheorem{remark}[theorem]{Remark}

\title{\vspace*{-5ex}
Non-Ljusternik--Schnirelman eigenvalues of the pure $p$-Laplacian exist
}

\author{Vladimir Bobkov\\}
\date{}

\AtEndDocument{%
	\par
	\medskip
	\bigskip
	\begin{tabular}{@{}l@{}}%
		(V.~Bobkov)\\[0.2em]
		\textsc{Institute of Mathematics, Ufa Federal Research Centre, RAS}\\
		\textsc{Chernyshevsky str. 112, 450008 Ufa, Russia}\\[0.3em]
		\orcidlink{0000-0002-4425-0218} 0000-0002-4425-0218\\[0.3em]
		\textit{E-mail address}: \texttt{bobkov@matem.anrb.ru, bobkovve@gmail.com}\\[1.5em]
\end{tabular}}

\begin{document}
	\maketitle
	\vspace*{-5ex}
	\begin{abstract}
		
		An old and well-known open problem in the critical point theory asks whether, for some $p \neq 2$ and some bounded domain $\Omega$, there exists a critical value of the $p$-Dirichlet energy $\|\nabla u\|_p^p$ over an $L^p(\Omega)$-sphere in $W_0^{1,p}(\Omega)$ lying outside of a Ljusternik--Schnirelman type sequence of critical values, the latter will be called \textit{LS eigenvalues} of the $p$-Laplacian. 
		In this work, we provide a positive answer by showing the existence of a non-LS eigenvalue when $p>2$ is sufficiently close to $2$ and $\Omega$ is just a planar rectangle close to the square. 
		The arguments pursue the observation that a \textit{simple} eigenvalue of the Laplacian can be a meeting point for several branches of eigenvalues of the $p$-Laplacian as $p$ varies. 
		Since LS eigenvalues are continuous with respect to $p$ and exhaust the whole spectrum when $p=2$, we deduce that at least one of the branches must contain non-LS eigenvalues. 
		
		\par
		\smallskip
		\noindent {\bf  Keywords}: 
		$p$-Laplacian;
		critical values; 
		variational eigenvalues;
		nonvariational eigenvalues;
		Ljusternik--Schnirelman theory.
		
		\noindent {\bf MSC2010}: 
		35J92,	
		35P30,  
		47J10.  
	\end{abstract}

\section{Introduction}\label{sec:intro}

	Let $\Omega$ be a bounded domain in $\mathbb{R}^2$ and let $p \in (1,+\infty)$. 
	Consider the classical problem of finding critical points of the $p$-Dirichlet energy $\|\nabla u\|_p^p$ over the unit $L^p(\Omega)$-sphere in $W_0^{1,p}(\Omega)$. 
	In view of the homogeneity reasons, this is equivalent to finding critical points of the Rayleigh quotient 
	$$
	R_p[u; \Omega] := \frac{\int_\Omega |\nabla u|^p \,dz}{\int_\Omega |u|^p \,dz}
	\quad \text{over}~ u \in \Wo \setminus \{0\}. 
	$$
	In the linear case $p=2$, the standard Courant--Fisher minimax formula
	\begin{equation}\label{eq:eigenvalue_linear}
	\lambda_{k}(2;\Omega) 
	:= 
	\min_{X_k} \max_{u \in X_k \setminus \{0\}} R_2[u; \Omega], 
	\quad k \in \mathbb{N},
	\end{equation}
	characterizes the whole spectrum, i.e., the full set of critical values including multiplicities. 
	Here, the minimum is taken over subspaces $X_k \subset W_0^{1,2}(\Omega)$ of dimension $k$. 
	In the nonlinear case $p\neq 2$, the same construction does not produce critical values, in general. 
	Instead, an infinite sequence of critical values can be described by means of the minimax variational principle of Ljusternik--Schnirelman type.
	As a particular and arguably most popular case, let us introduce it via the Krasnoselskii genus, but it is not restrictive for our main result.  
	Namely, for a symmetric set $\mathcal{A} \subset W^{1,p}_0(\Omega)$, we define the Krasnoselskii genus of $\mathcal{A}$ as
	$$ 
	\gamma(\mathcal{A}) 
	=
	\inf\{m\in\mathbb{N}:~ \exists \mbox{ a continuous odd map } f:\mathcal{A} \to \mathbb{R}^{m} \setminus \{0\}\},
	$$
	with the convention $\gamma(\mathcal{A}) := +\infty$ if no required map $f$ exists for any $m \in \mathbb{N}$. 
	For $k \in \mathbb{N}$, we define
	$$ 
	\Sigma_k(p;\Omega) 
	= 
	\left\{\mathcal{A} \subset \Wo \setminus \{0\}:~  \mathcal{A} \mbox{ is symmetric, compact, and } \gamma(\mathcal{A})\geq k\right\}
	$$
	and
	\begin{equation}\label{highereigenvalues} 
	\lambda_k(p;\Omega) 
	:= 
	\inf_{\mathcal{A} \in \Sigma_k(p;\Omega)} \max_{u \in \mathcal{A}} R_p[u;\Omega].  
	\end{equation}
	It is known that each $\lambda_k(p;\Omega)$ is indeed a critical value of $R_p[\cdot;\Omega]$ and
	\begin{equation}\label{eq:sequenceoflambdas}
	0 < \lambda_1(p;\Omega) < \lambda_2(p;\Omega) \leq \dots \leq \lambda_k(p;\Omega) \to +\infty
	\quad \text{as } k \to +\infty,
	\end{equation}
	see, e.g., \cite{anane,cuesta,garciaperal}, as well as general seminal works \cite{amann,browder1,FNSS,szulkin}.
	Moreover, the sequence \eqref{highereigenvalues} at $p=2$ agrees with \eqref{eq:eigenvalue_linear}, see, e.g., \cite{cuesta}.
	
	Any critical point $u \in \Wo \setminus \{0\}$ of $R_p[\cdot;\Omega]$ with $R_p[u;\Omega] = \lambda$ satisfies
	\begin{equation}\label{eq:D:weak}
	\int_\Omega |\nabla u|^{p-2} \langle \nabla u, \nabla \phi \rangle \,dz
	= 
	\lambda \int_\Omega |u|^{p-2} u \phi \,dz
	\quad \text{for all}~ \phi \in \Wo.
	\end{equation}
	That is, such $u$ can be seen as a weak solution of the canonical eigenvalue problem for the $p$-Laplacian:
	\begin{equation}\label{eq:D}
		\tag{$\mathcal{D}$}
		\left\{
		\begin{aligned}
			-\text{div}(|\nabla u|^{p-2} \nabla u) &= \lambda |u|^{p-2} u 
			&&\text{in } \Omega, \\
			u &= 0  &&\text{on } \partial \Omega.
		\end{aligned}
		\right.
	\end{equation}
Because of this reason, it is common to call critical points of $R_p[\cdot;\Omega]$ as \textit{eigenfunctions} and the corresponding critical values as \textit{eigenvalues} of the $p$-Laplacian in $\Omega$. 
For clarity, we call the eigenvalues of the Ljusternik--Schnirelman type \eqref{highereigenvalues} as \textit{LS eigenvalues}.\footnote{Sometimes, LS eigenvalues are called ``variational'', see, e.g, \cite{bindingrynne1}, but, as fairly noted in \cite{drabek-var}, this notion might be ambiguous, as some ``nonvariational'' eigenvalues, whenever exist, could still be found by minimax variational methods, see \cite{drabek-takac}. We also refer to Remark~\ref{rem:variational} below.}
The additional prefix is used to reflect the following problem: 
\begin{enumerate}[label={($\mathcal{P}$)}]
\item\label{Q} 
Is there any non-LS eigenvalue $\lambda$ of the $p$-Laplacian in $\Omega$, i.e., $\lambda \not\in \{\lambda_k(p;\Omega)\}$? 
\end{enumerate}

The problem \ref{Q} is dated back not later than to the works of \textsc{Fu\v{c}\'ik, Ne\v{c}as,  Sou\v{c}ek, \& Sou\v{c}ek} (see, e.g., \cite{FNSS2} and the famous monograph \cite{FNSS}), who were interested in finding not only lower bounds, but also upper bounds on the number of critical points of nonlinear functionals coming from the PDE theory. 
In particular, under sufficiently strong  assumptions, they succeeded to prove that the set of critical values of general nonlinear functionals related to the present settings is countable \cite{FNSS2}. 
Some examples showing that the Ljusternik--Schnirelman procedure does not describe all critical values of special  nonlinear finite dimensional mappings were also described, see \cite[Remark, p.~115]{FNSS}. 

It is known that the problem~\ref{Q} has a \textit{negative} answer when either $N=1$ or $p=2$, see, e.g., \cite[Propositions~4.6, 4.7]{cuesta}.
On the other hand, in contexts closely related to the present settings, some \textit{positive} answers are also known:
\begin{enumerate}
\item \textsc{Binding \& Rynne}
\cite{bindingrynne1} were probably the first who obtained the \textit{existence} of non-LS eigenvalues for the $p$-Laplacian. For a torus $\Omega$ in $\mathbb{R}^N$, $N \geq 2$, they proved that for any $p>1$ and $n \in \mathbb{N}$ there exists a weight function $r \in C^1(\overline{\Omega})$ such that the $p$-Dirichlet energy $\|\nabla u\|_p^p$ over the ``weighted'' unit $L^q(\Omega;r)$-sphere in $W^{1,p}(\Omega)$
has at least $n$ non-LS eigenvalues. 
Notice that the underlying space is $W^{1,p}(\Omega)$, so eigenfunctions satisfy the Neumann boundary conditions. 
In the one-dimensional case, the existence of non-LS eigenvalues in a similar weighted case is obtained for the periodic boundary conditions. 
The results also hold for the problem perturbed by a potential rather than by a weight. 

\item \textsc{Dr\'abek \& Tak\'a\v{c}} 
\cite{drabek-takac}
worked in settings close to \cite{bindingrynne1} and shown that, in the one-dimensional case with the periodic boundary conditions and a potential, there exists a non-LS eigenvalue characterized by a minimax formula different from \eqref{highereigenvalues}. 
Namely, the obtained characterization is \textit{local} in the choice of  admissible sets for the minimization, compared to the \textit{global} characterization \eqref{highereigenvalues}. 

\item \textsc{Brasco \& Franzina} \cite{brasco-franzina-pathalogical} proved that already when $\Omega$ is the union of two disjoint balls of dimension $N \geq 1$, for any $1<q<p$ the set of critical values of the $p$-Dirichlet energy $\|\nabla u\|_p^p$ over the unit $L^q(\Omega)$-sphere in $W_0^{1,p}(\Omega)$ is \textit{not discreet} and has countably many accumulation points, so that there are plenty non-LS eigenvalues.  
The subhomogeneity assumption $q<p$ seems vital, but it remains unknown whether the set of critical values might have a similar degenerate behavior when $\Omega$ is connected. 
\end{enumerate}

Nonetheless, to the best of our knowledge, there were no answer to the problem \ref{Q} in the ``pure'' $p$-Laplacian settings -- without weights or potentials, for the Dirichlet boundary conditions, in a connected $\Omega$, and with the same homogeneity ($q=p$). 
The aim of the present work is to justify the existence of a non-LS eigenvalue in this case, and hence to answer \ref{Q} \textit{affirmatively}. 

\medskip
Hereinafter, we work with planar rectangles $\mathcal{R}_a = (0,a) \times (0,1)$, where $a>0$. 
We denote by $\lambda_{\boxbarl}(p; \mathcal{R}_{a})$ a higher eigenvalue of the $p$-Laplacian in $\mathcal{R}_{a}$ whose eigenfunction has opposite constant signs in two subrectangles symmetric with respect to the vertical middle line $\{x=a/2\}$ of $\mathcal{R}_{a}$. 
In the same manner, we denote by $\lambda_{\boxminus}(p; \mathcal{R}_{a})$ a higher eigenvalue of the $p$-Laplacian in $\mathcal{R}_{a}$ whose eigenfunction has opposite constant signs in two subrectangles symmetric with respect to the horizontal middle line $\{y=1/2\}$ of $\mathcal{R}_{a}$. 

Our main result is as follows. 
\begin{theorem}\label{thm:main}
For any sufficiently small $a>1$, there exists a sufficiently small $p>2$ such that either $\lambda_{\boxbarl}(p; \mathcal{R}_{a})$ or $\lambda_{\boxminus}(p; \mathcal{R}_{a})$ is a non-LS eigenvalue of the $p$-Laplacian in $\mathcal{R}_a$.
\end{theorem}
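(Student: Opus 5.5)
The plan is a counting argument on a window of the spectrum: for suitable $a$ and $p$, three distinct eigenvalues sit in a window that holds only two LS eigenvalues. The three will be $\lambda_2(p;\mathcal R_a)$, $\lambda_{\boxbarl}(p;\mathcal R_a)$ and $\lambda_{\boxminus}(p;\mathcal R_a)$, so one of the last two is not LS.

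\emph{Step 1: the picture at $p=2$.} For $a>1$ close to $1$ we have
$$\lambda_{\boxbarl}(2;\mathcal R_a)=\pi^2(4/a^2+1)=\lambda_2(2;\mathcal R_a),\qquad \lambda_{\boxminus}(2;\mathcal R_a)=\pi^2(1/a^2+4)=\lambda_3(2;\mathcal R_a).$$
These two values are distinct but $O(a-1)$-close. Moreover $\lambda_1(2;\mathcal R_a)=\pi^2(1/a^2+1)$ and $\lambda_4(2;\mathcal R_a)\ge \pi^2(4/a^2+4)$ are separated from them by gaps bounded below uniformly in $a\in(1,1+\varepsilon)$.

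Each of $\lambda_{\boxbarl}(p;\mathcal R_a)$ and $\lambda_{\boxminus}(p;\mathcal R_a)$ is the first eigenvalue of a half-rectangle, so it depends continuously on $(p,a)$. The LS eigenvalues $\lambda_k(p;\mathcal R_a)$ are also continuous in $p$, uniformly for $a$ near $1$; I would establish this by a standard argument, transporting admissible sets $\mathcal A$ between $W_0^{1,p}$ spaces through smooth approximation. Hence there is $\delta>0$, independent of $a$ close to $1$, with the following property. For $p\in(2,2+\delta)$, the interval $I=(\lambda_1(p;\mathcal R_a),\lambda_4(p;\mathcal R_a))$ contains $\lambda_{\boxbarl}(p)$, $\lambda_{\boxminus}(p)$, $\lambda_2(p)$ and $\lambda_3(p)$, and no other LS eigenvalue.

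\emph{Step 2: strict inequality $\lambda_2(p;\mathcal R_a)<\lambda_{\boxbarl}(p;\mathcal R_a)$ for a suitable $p$.} This is the key step and the main obstacle. Let $\varphi_1,\varphi_2$ be the $p$-eigenfunctions realizing $\lambda_{\boxbarl}(p)$ and $\lambda_{\boxminus}(p)$, normalized in $L^p$. The circle $\mathcal A=\{\cos\theta\,\varphi_1+\sin\theta\,\varphi_2\}$ is symmetric, compact and has genus $2$, so
$$\lambda_2(p)\le\max_\theta R_p[\cos\theta\,\varphi_1+\sin\theta\,\varphi_2].$$
I would expand $R_p$ at $p=2+\varepsilon$ to first order. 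The $p=2$ eigenfunctions are products of sines, and $\varphi_i$ differ from them by $O(\varepsilon)$. Writing $|\xi|^{p}=|\xi|^2(1+\varepsilon\log|\xi|+\dots)$, I expect
$$R_p[u_\theta]=\pi^2\bigl(4\cos^2\theta/a^2+\sin^2\theta(1/a^2+4)\bigr)+\varepsilon\,D(\theta)+O(\varepsilon^2)+O(\varepsilon(a-1)),$$
where $D(\theta)$ is computed on the square.

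On the square, the square symmetry forces $D(0)=D(\pi/2)$. The heart of the matter is to show $D(\theta)<D(0)-c$ for all $\theta$ away from $\{0,\pi/2\}$, with some $c>0$. The intuition is that for $p>2$ the convexity of $t\mapsto t^{p/2}$ rewards mixed, "diagonal" combinations relative to $\varphi_{\boxbarl}$. I would verify this by an explicit computation with the logarithmic derivative terms $\int|\nabla u|^2\log|\nabla u|$ and $\int u^2\log|u|$, tracking the constraint via the normalization.

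Near $\theta=0,\pi/2$, the zeroth-order term handles things: it is already at most $\lambda_{\boxbarl}$ up to $O(\varepsilon)$ corrections, which must be checked by a second-order look in $\theta$. Choosing $\varepsilon$ with $C(a-1)\ll\varepsilon<\delta$ is possible for $a$ close enough to $1$. Then the maximum over the circle is $<\lambda_{\boxbarl}(p)$, giving $\lambda_2(p)<\lambda_{\boxbarl}(p)$.

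\emph{Step 3: conclusion.} The same expansion, or simply continuity from Step 1 with $\lambda_{\boxbarl}<\lambda_{\boxminus}$ at $p=2$ (if needed, adjusting so that the $O(a-1)$ gap dominates the $\varepsilon$-shifts at $\theta=0,\pi/2$), gives $\lambda_2(p)<\lambda_{\boxbarl}(p)\le\lambda_{\boxminus}(p)$ inside $I$. If both $\lambda_{\boxbarl}(p)$ and $\lambda_{\boxminus}(p)$ were LS, they would each equal $\lambda_2(p)$ or $\lambda_3(p)$, the only LS values in $I$. Since both exceed $\lambda_2(p)$, this forces $\lambda_{\boxbarl}(p)=\lambda_{\boxminus}(p)=\lambda_3(p)$. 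This is excluded either by strictness of the order, or by noting that in that case $\lambda_{\boxbarl}$ would still be non-LS relative to the remaining branch. Either way, one of the two is a non-LS eigenvalue.

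The delicate balance is the order of choices: fix $a$ first, then take $p-2$ large compared with $a-1$ but below the uniform $\delta$. The sign computation of $D(\theta)$ on the square is where the proof could fail.
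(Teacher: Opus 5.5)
Your counting end-game is sound, and shorter than the paper's continuation-in-$p$ argument in Steps II--III. If at a single pair $(p,a)$ you have $\lambda_2(p;\mathcal{R}_a)<\lambda_{\boxbarl}(p;\mathcal{R}_a)<\lambda_{\boxminus}(p;\mathcal{R}_a)<\lambda_4(p;\mathcal{R}_a)$, then both being LS forces both to equal $\lambda_3(p;\mathcal{R}_a)$, which is absurd.

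The genuine gap is Step 2, and it is fatal as designed. Your circle $\mathcal{A}$ contains $\varphi_1$ (at $\theta=0$) and $\varphi_2$ (at $\theta=\pi/2$). Hence $\max_{\mathcal{A}}R_p\ge\max\{\lambda_{\boxbarl}(p;\mathcal{R}_a),\lambda_{\boxminus}(p;\mathcal{R}_a)\}\ge\lambda_{\boxbarl}(p;\mathcal{R}_a)$, and no expansion can yield $\max_{\mathcal{A}}R_p<\lambda_{\boxbarl}(p;\mathcal{R}_a)$. Concretely, the zeroth-order term is $\cos^2\theta\,\lambda_{\boxbarl}(2;\mathcal{R}_a)+\sin^2\theta\,\lambda_{\boxminus}(2;\mathcal{R}_a)$. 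This is \emph{at least} $\lambda_{\boxbarl}(2;\mathcal{R}_a)$ and maximal at $\theta=\pi/2$, the opposite of what you assert near the endpoints. Even on the square, your expected picture $D(\theta)<D(0)=D(\pi/2)$ puts the maximum at the endpoints, where $R_p$ equals $\lambda_{\boxbarl}(p;\mathcal{R}_1)$ exactly. Balancing $p-2$ against $a-1$ cannot help, since the endpoint values are exact. Any genus-two test set containing the eigenfunction of $\lambda_{\boxbarl}$ has this defect.

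What is needed is a genus-two set avoiding that eigenfunction, and the paper uses the diagonal nodal configuration. Let $w$ be the first eigenfunction of $\mathcal{T}_1$ extended oddly across the diagonal. Then $R_p\equiv\lambda_1(p;\mathcal{T}_1)$ on $\{s w^+-t w^-\}$, because the supports are disjoint, so $\lambda_2(p;\mathcal{R}_1)\le\lambda_{\boxbslashl}(p;\mathcal{R}_1)=\lambda_1(p;\mathcal{T}_1)$ for all $p$. Since $\lambda_1(2;\mathcal{T}_1)=\lambda_1(2;\mathcal{R}_{1/2})$, strictness for $p>2$ near $2$ follows from $\partial_p\lambda_1(p;\mathcal{T}_1)|_{p=2}<\partial_p\lambda_1(p;\mathcal{R}_{1/2})|_{p=2}$. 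Numerically this is about $171.86<176.04$, obtained from the derivative formula for $\lambda_1$ without any $O(p-2)$ control of eigenfunctions. It is exactly your $D(\pi/4)<D(0)$, so your heuristic is right, but it must be used through the exact triangle eigenvalue, not along the linear circle.

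The paper then fixes such a $p$ and uses continuity in $a$ to get $\lambda_2<\lambda_{\boxbarl}$ for $a\in(1,1+\sigma(p)]$. Choosing $p$ before $a$ avoids the joint uniformity that your order $p-2\gg a-1$ would require. In your regime you also cannot get $\lambda_{\boxbarl}<\lambda_{\boxminus}$ by continuity from $p=2$, since the gap there is only $3\pi^2(1-a^{-2})=O(a-1)$. You need the uniform argument of Lemma~\ref{lem:comparison}: $f=\lambda_{\boxminus}-\lambda_{\boxbarl}$ is $C^1$, $f(p,1)\equiv0$ by the symmetry of the square, and $\partial_a f(2,1)=6\pi^2>0$.
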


Vaguely speaking, the idea of the construction is to pursue the observation that 
either $\lambda_2(2;\mathcal{R}_a)$ or $\lambda_3(2;\mathcal{R}_a)$ is a meeting point for two continuous and nonidentical branches of eigenvalues of the $p$-Laplacian in $\mathcal{R}_a$ as $p$ varies -- either the pair $\lambda_2(p;\mathcal{R}_a)$ and $\lambda_{\boxbarl}(p; \mathcal{R}_{a})$, or 
$\lambda_3(p;\mathcal{R}_a)$ and $\lambda_{\boxminus}(p; \mathcal{R}_{a})$. 
Since LS eigenvalues are continuous with respect to $p$ and exhaust the whole spectrum when $p=2$, we deduce that one of the branches must contain non-LS eigenvalues as $p \to 2$, otherwise $\lambda_2(2;\mathcal{R}_a)$ or $\lambda_3(2;\mathcal{R}_a)$ would have multiplicity at least two, which is impossible when $a>1$ is sufficiently small.

\medskip
The work has the following structure. 
In Section~\ref{sec:preliminaries}, we provide some auxiliary results. 
The proof of Theorem~\ref{thm:main} is given in Section~\ref{sec:proof}, with a supplement of a few additional general remarks. 
Finally, in Section~\ref{sec:abitmoreaboutsquare}, we discuss a curious side observation on the nodal geometry of third eigenfunctions of the $p$-Laplacian in the square $\mathcal{R}_1$.

\section{Preliminaries}\label{sec:preliminaries}

In this section, we collect several auxiliary results needed for the proof of Theorem~\ref{thm:main}. 
We start by introducing a few additional notations, for convenience. 
Recall that, by definition,
\begin{equation}\label{eq:lambda=lambda}
\lambda_{\boxbarl}(p;\mathcal{R}_a)
=
\lambda_{1}(p;\mathcal{R}_{a/2}).
\end{equation}
Occasionally, we work with the expanded notation $\mathcal{R}_a^b = (0,a) \times (0,b)$. In particular,  $\mathcal{R}_a^1 \equiv \mathcal{R}_a$. 
By definition and the scaling property, we have 
\begin{equation}\label{eq:lambda=lambda2}
\lambda_{\boxminus}(p;\mathcal{R}_a)
=
\lambda_{1}(p;\mathcal{R}_{a}^{1/2})
=
2^p \lambda_{1}(p;\mathcal{R}_{2a}).
\end{equation}
Also, we denote by $\mathcal{T}_1$ the triangle (considered as a domain) spanned on the vertices $(0,0)$, $(1,0)$, $(0,1)$. 
Let us introduce the eigenvalue 
$\lambda_{\boxbslashl}(p;\mathcal{R}_1)$
of the $p$-Laplacian in the square $\mathcal{R}_1$ whose eigenfunction has opposite constant signs in $\mathcal{T}_1$ and its reflection with respect to the diagonal $\{x=-y\}$ of $\mathcal{R}_{1}$, that is,
\begin{equation}\label{eq:lambda=lambda3}
\lambda_{\boxbslashl}(p;\mathcal{R}_1)
=
\lambda_{1}(p;\mathcal{T}_1).
\end{equation}

For a function $u \in W_0^{1,p}(\mathcal{R}_\alpha)$, we denote by $\tilde{u}: \mathbb{R}^2 \to \mathbb{R}$ the antisymmetric (odd) extension of $u$ with respect to the sides of $\mathcal{R}_\alpha$. 
For instance, it can be defined as
$$
\tilde{u}(x,y)
=
\text{sgn}(x)
\text{sgn}(y) \,
u(|x|,|y|)
\quad \text{for}~ |x| \leq a,~ |y| \leq 1,
$$
and $\tilde{u}(x+2a,y)=\tilde{u}(x,y)$, $\tilde{u}(x,y+2)=\tilde{u}(x,y)$ for other $(x,y) \in \mathbb{R}^2$. 
It is clear that 
$\tilde{u} \in W_0^{1,p}(\mathcal{R}_{k\alpha}^k)$ for any $k \in \mathbb{N}$, and hence $\tilde{u} \in W^{1,p}_\text{loc}(\mathbb{R}^2)$. 

By $[u]_{1,\beta;\Omega}$ we denote the $(1,\beta)$-H\"older seminorm of $u$ over a domain $\Omega$, and for $q \in [1,+\infty]$ we sometimes use the expanded notation $\|u\|_{q;\Omega}$ for the $L^q(\Omega)$-norm of $u$, for clarity.

\subsection{Uniform regularity of eigenfunctions}

\begin{lemma}\label{lem:regularity}
For any $1<p_1 \leq p_2$,  $0<a_1 \leq a_2$, and $0<b_1 \leq b_2$, there exist $\beta \in (0,1)$ and $C>0$ such that for any $p \in [p_1,p_2]$, $a \in [a_1,a_2]$, and for any eigenfunction $u$ of the $p$-Laplacian in $\mathcal{R}_a$ normalized as $\|u\|_{p;\mathcal{R}_a}=1$ and whose eigenvalue belongs to $[b_1,b_2]$, we have $u \in C^{1,\beta}(\overline{\mathcal{R}_{a}})$ with $[u]_{1,\beta;\mathcal{R}_a} \leq C$. 
\end{lemma}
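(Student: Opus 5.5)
The plan is to pass to the antisymmetric extension $\tilde u$ and use interior $C^{1,\beta}$ estimates of DiBenedetto/Tolksdorf type, with constants tracked uniformly in $p\in[p_1,p_2]$. Boundary regularity at the corners of $\mathcal{R}_a$ then reduces to interior regularity. Let $u$ be an eigenfunction with $\|u\|_{p;\mathcal{R}_a}=1$ and eigenvalue $\lambda\in[b_1,b_2]$.

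First I will check that $\tilde u$ is a weak solution of $-\Delta_p \tilde u = \lambda|\tilde u|^{p-2}\tilde u$ in all of $\mathbb{R}^2$. Take a test function $\phi\in C_c^\infty(\mathbb{R}^2)$. Its restriction to each reflected copy of $\mathcal{R}_a$ is not zero on the boundary. So I decompose $\phi$ in each cell into its odd part with respect to the reflections, i.e. the sum over the reflection group with signs. The contributions of the even parts cancel by the antisymmetry of the integrands. The odd part, restricted to $\mathcal{R}_a$, vanishes on $\partial\mathcal{R}_a$ and lies in $W_0^{1,p}(\mathcal{R}_a)$, so \eqref{eq:D:weak} applies to it. This is the standard reflection principle for odd extensions, and I expect it to be routine.

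Second, I need a uniform $L^\infty$ bound: $\|u\|_{\infty;\mathcal{R}_a}\le M$ with $M$ depending only on $p_1,p_2,a_1,a_2,b_2$. I will get it by Moser iteration or De Giorgi's method applied to $-\Delta_p u=\lambda|u|^{p-2}u$, using $\|u\|_p=1$. The constants in the iteration depend continuously on $p$ and on $\lambda\le b_2$, and the domain measure lies in $[a_1,a_2]$. Alternatively, in dimension two I can use the embedding $W^{1,p}\subset L^q$ for every $q$, with $\|\nabla u\|_p^p=\lambda\le b_2$, to start the iteration.

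Third, I apply the interior $C^{1,\beta}$ estimate (DiBenedetto, Tolksdorf, Lieberman) to $\tilde u$ on a ball $B_{2r}(z_0)$, where $z_0\in\overline{\mathcal{R}_a}$ and $r=\min(a_1,1)$, say. The right-hand side $\lambda|\tilde u|^{p-2}\tilde u$ is bounded by $b_2M^{p-1}$. The estimate gives $[\tilde u]_{1,\beta;B_r(z_0)}\le C$, where $\beta$ and $C$ depend only on the structural data: $p$, the sup norm of $\tilde u$ and of the right-hand side, and $r$. A finite covering of $\overline{\mathcal{R}_a}$ (finitely many balls, uniformly in $a\le a_2$) then yields the bound on $[u]_{1,\beta;\mathcal{R}_a}$. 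Any two points can be joined through overlapping balls, or I can just combine the local seminorm bound with a global gradient bound.

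The main obstacle is uniformity in $p$. The cited regularity theorems state that $\beta$ and $C$ depend on $p$, but they do not make explicit that this dependence is locally uniform, i.e. that $\beta$ stays bounded below and $C$ bounded above on the compact interval $[p_1,p_2]$. I will handle this by invoking Lieberman's version, whose constants depend on the structure constants of the operator $A(\xi)=|\xi|^{p-2}\xi$. These structure constants (ellipticity bounds like $\min(p-1,1)$ and $\max(p-1,1)$) are uniform for $p\in[p_1,p_2]$. If needed, I will also argue by contradiction and compactness: suppose the seminorms blow up along a sequence $p_n\to p_*$. Then the sequence still satisfies uniform structure conditions, which contradicts the uniform version of the estimate.
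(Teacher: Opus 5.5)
Your proposal is correct and follows the paper's proof essentially step by step. The steps are the antisymmetric extension $\tilde u$ solving the equation weakly in $\mathbb{R}^2$ (the paper refers to the argument of Anoop--Dr\'abek--Sasi), a uniform $L^\infty$ bound (the paper cites Lindqvist's Lemma~4.1 applied to $u^\pm$), and Tolksdorf-type interior $C^{1,\alpha}$ estimates for $\tilde u$ on a fixed compact set containing all $\mathcal{R}_a$. The only difference is how uniformity in $p$ is justified: the paper relies on Lindqvist's observation that Tolksdorf's constants depend continuously on $p$, whereas you use Lieberman's structural framework, which is clean in his version with constants depending only on bounds $\delta \le t g'(t)/g(t) \le g_0$ (here $\delta=p_1-1$, $g_0=p_2-1$); your fallback compactness argument is circular, since it presupposes the uniform estimate, but it is not needed.
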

\begin{proof}
Applying a uniform $L^\infty$-estimate of
\cite[Lemma~4.1]{lind} to positive and negative parts of $u$, we obtain the existence of $C_1>0$ such that $\|u\|_{\infty;\mathcal{R}_a} \leq C_1$ for any $p \in [p_1,p_2]$, $a \in [a_1,a_2]$, and any normalized eigenfunction $u$ of the $p$-Laplacian in $\mathcal{R}_a$ whose eigenvalue $\lambda(p;\mathcal{R}_{a})$ belongs to $[b_1,b_2]$. 
Arguing as in the proof of \cite[Theorem~1.2]{ADS}, it can be shown that the antisymmetric extension $\tilde{u} \in W^{1,p}_\text{loc}(\mathbb{R}^2)$ of $u$ weakly satisfies
\begin{equation}\label{eq:utilde}
-\Delta_{p} \tilde{u} = \lambda(p;\mathcal{R}_{a}) |\tilde{u}|^{p-2} \tilde{u}
\quad \text{in}~ \mathbb{R}^2. 
\end{equation}
Noting that $\|\tilde{u}\|_{\infty;\mathbb{R}^2} = \|u\|_{\infty;\mathcal{R}_a}$, fixing any compact set $K$ such that $\mathcal{R}_{a} \subset K$ for all $a \in [a_1,a_2]$, and applying \cite{tolksdorf}, we get the existence of $C_2 = C_2(C_1,p,K)>0$ and $\alpha = \alpha(C_1,p,K) \in (0,1)$ such that $u \in C^{1,\alpha}(K)$ and $[u]_{1,\alpha;K} \leq C_2$. 
As noted in the proof of \cite[Theorem~6.3]{lind}, the dependence of $C_2$ and $\alpha$ on $p$ in the arguments of \cite{tolksdorf} is of continuous nature, and $\sup_{p \in [p_1,p_2]} C_2 < +\infty$ and $\inf_{p \in [p_1,p_2]} \alpha > 0$ for a fixed $K$. 
Hence, we get $u \in C^{1,\beta}(\overline{\mathcal{R}_{a}})$ and $[u]_{1,\beta;\mathcal{R}_{a}} \leq C$ for some $\beta \in (0,1)$ and $C>0$ which do not depend on $p \in [p_1,p_2]$, $a \in [a_1,a_2]$, and a normalized eigenfunction $u$ associated with $\lambda(p;\mathcal{R}_{a}) \in [b_1,b_2]$. 
\end{proof}

Using Lemma~\ref{lem:regularity} and 
the Arzel\`a-Ascoli theorem, we obtain the following convergence result. 
\begin{lemma}\label{lem:compactness}
Let $\{p_n\} \subset (1,+\infty)$ converge to $p > 1$. 
Let $\{a_n\} \subset (0,+\infty)$ converge to $a > 0$.
Let $\{\lambda_n\} \subset (0,+\infty)$ be a sequence of eigenvalues of the $p_n$-Laplacian in $\mathcal{R}_{a_n}$ converging to some $\lambda > 0$. 
For each $n$, let $u_n \in W_0^{1,p_n}(\mathcal{R}_{a_n})$ be an eigenfunction corresponding to $\lambda_n$ and normalized as $\|u_n\|_{p_n;\mathcal{R}_{a_n}} = 1$.
Then there exists an eigenfunction $u \in W_0^{1,p}(\mathcal{R}_a)$ of the $p$-Laplacian in $\mathcal{R}_a$ associated with the eigenvalue $\lambda$ such that $\|u\|_{p;\mathcal{R}_{a}}=1$ and 
$\tilde{u}_n \to \tilde{u}$ in $C^1(\overline{\mathcal{R}_{2a}})$, up to a subsequence.
\end{lemma}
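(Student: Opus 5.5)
The plan is to combine the uniform H\"older bound of Lemma~\ref{lem:regularity}, applied to the odd periodic extensions $\tilde{u}_n$ on a fixed compact set, with Arzel\`a--Ascoli, and then pass to the limit in the weak formulation \eqref{eq:D:weak}.

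\emph{Uniform bounds.} Since $p_n \to p>1$, $a_n \to a>0$ and $\lambda_n \to \lambda>0$, I fix $1<p_1\le p_n\le p_2$, $0<a_1\le a_n \le a_2$ and $0<b_1\le\lambda_n\le b_2$ for all large $n$. Lemma~\ref{lem:regularity} then yields $\beta\in(0,1)$ and $C>0$, independent of $n$, with $[u_n]_{1,\beta;\mathcal{R}_{a_n}}\le C$. Its proof also gives $\|u_n\|_\infty\le C_1$ and, through the bound on the compact set $K$, a uniform bound on $\|\nabla u_n\|_\infty$ (alternatively, combine the seminorm bound with $u_n=0$ on $\partial\mathcal{R}_{a_n}$). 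Odd reflection preserves these bounds: $\tilde{u}_n$ is $C^{1}$ across the reflection lines because $u_n$ vanishes on the sides, and it is $C^{1,\beta}$ on a fixed set $K \supset \overline{\mathcal{R}_{2a_2}}\cup\overline{\mathcal{R}_{2a}}$ with constants independent of $n$. This is essentially the bound on $K$ obtained in the proof of Lemma~\ref{lem:regularity}, because $\tilde u_n$ solves \eqref{eq:utilde} on all of $\mathbb{R}^2$.

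\emph{Compactness.} Arzel\`a--Ascoli gives a subsequence with $\tilde{u}_n\to v$ in $C^1(K)$. Since each $\tilde u_n$ is odd in $x$ and $y$ and $2a_n$-periodic in $x$, and $a_n\to a$, the limit $v$ is odd and $2a$-periodic in $x$. Hence $v=\tilde u$ with $u:=v|_{\mathcal{R}_a}$, and $u=0$ on $\partial\mathcal{R}_a$. Here I use that $u_n=0$ on $\{x=a_n\}$ together with uniform convergence and equicontinuity to get $u=0$ on $\{x=a\}$. Then $u\in C^1(\overline{\mathcal{R}_a})$ vanishes on the boundary, so $u\in W_0^{1,p}(\mathcal{R}_a)$. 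Uniform convergence on $K$ together with $|\mathcal{R}_{a_n}\triangle\mathcal{R}_a|\to0$ and the uniform $L^\infty$ bound give $\int_{\mathcal{R}_{a_n}}|u_n|^{p_n}\to\int_{\mathcal{R}_a}|u|^p$. Thus $\|u\|_{p;\mathcal{R}_a}=1$, and in particular $u\ne0$.

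\emph{Limit equation.} I take $\phi\in C_c^\infty(\mathcal{R}_a)$; for large $n$, $\operatorname{supp}\phi\subset\mathcal{R}_{a_n}$, so \eqref{eq:D:weak} holds for $u_n,p_n,\lambda_n$ with this $\phi$. The integrands $|\nabla u_n|^{p_n-2}\nabla u_n$ and $|u_n|^{p_n-2}u_n$ converge uniformly to $|\nabla u|^{p-2}\nabla u$ and $|u|^{p-2}u$. This holds because the map $(s,\xi)\mapsto|\xi|^{s-2}\xi$ is jointly continuous on $[p_1,p_2]\times\mathbb{R}^2$ (it extends continuously by $0$ at $\xi=0$, since $s>1$) and hence uniformly continuous on compact sets. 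Passing to the limit gives \eqref{eq:D:weak} for $u$ with eigenvalue $\lambda$ and all $\phi\in C_c^\infty$, and density extends it to all of $W_0^{1,p}(\mathcal{R}_a)$.

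\emph{Main obstacle.} The main difficulty is the bookkeeping with the moving domains $\mathcal{R}_{a_n}$. I need the uniform $C^{1,\beta}$ bound for the extensions on a fixed compact set, not only on $\mathcal{R}_{a_n}$, and I need the boundary condition and the normalization to survive as $a_n\to a$. I would handle both through the extension $\tilde u_n$ and the joint continuity in $p$ described above.
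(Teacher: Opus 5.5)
Your proposal is correct and follows the same route as the paper, which simply invokes the uniform $C^{1,\beta}$ bounds of Lemma~\ref{lem:regularity} (holding for the odd extensions on a fixed compact set, via \eqref{eq:utilde}) together with the Arzel\`a--Ascoli theorem. The paper leaves three steps implicit: identifying the limit as $\tilde{u}$, preserving the normalization as $a_n \to a$, and passing to the limit in \eqref{eq:D:weak} using joint continuity in $(p,\xi)$. You carry out all three correctly.
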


\subsection{Regularity of eigenvalues}

\begin{lemma}\label{lem:continuity}
Let $k \in \mathbb{N}$. 
Then the mapping $(p,a) \mapsto \lambda_k(a;\mathcal{R}_a)$ 
is continuous in $(1,+\infty) \times (0,+\infty)$. 
\end{lemma}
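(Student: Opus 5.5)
The statement has a typo: it should read $(p,a) \mapsto \lambda_k(p;\mathcal{R}_a)$. The plan is to reduce joint continuity to two ingredients. The first is continuity in $p$ for a fixed domain; this is known from \cite{lind} or can be proved by the argument below. The second is the scaling/transplantation comparison that moves between different rectangles. Concretely, for $a,a'>0$ consider the linear map $T_{a\to a'}u(x,y)=u(ax/a',y)$. It is an odd homeomorphism from $W_0^{1,p}(\mathcal{R}_a)$ onto $W_0^{1,p}(\mathcal{R}_{a'})$, so it preserves symmetric compact sets and the Krasnoselskii genus. It also satisfies
\[
\min\{(a/a')^p,1\}\, R_p[u;\mathcal{R}_a]\le R_p[T_{a\to a'}u;\mathcal{R}_{a'}]\le \max\{(a/a')^p,1\}\, R_p[u;\mathcal{R}_a],
\]
since $|\nabla (Tu)|^p$ is pointwise squeezed between these factors times $|\nabla u|^p\circ T^{-1}$, and the Jacobian factors cancel in the quotient. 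Consequently
\[
\min\{(a/a')^p,1\}\lambda_k(p;\mathcal{R}_a)\le\lambda_k(p;\mathcal{R}_{a'})\le\max\{(a/a')^p,1\}\lambda_k(p;\mathcal{R}_a),
\]
so $a\mapsto\lambda_k(p;\mathcal{R}_a)$ is locally Lipschitz. The constants are uniform for $p$ in compact sets, because $(a/a')^p\to1$ uniformly in such $p$ as $a'\to a$.

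Next, the triangle inequality gives
\[
|\lambda_k(p_n;\mathcal{R}_{a_n})-\lambda_k(p;\mathcal{R}_a)|\le|\lambda_k(p_n;\mathcal{R}_{a_n})-\lambda_k(p_n;\mathcal{R}_a)|+|\lambda_k(p_n;\mathcal{R}_a)-\lambda_k(p;\mathcal{R}_a)|.
\]
The first term is at most $(\max\{(a_n/a)^{\pm p_n}\}-1)\lambda_k(p_n;\mathcal{R}_a)$. It tends to zero provided $\lambda_k(p_n;\mathcal{R}_a)$ is bounded, and that boundedness follows from the second ingredient. So joint continuity reduces to continuity in $p$ on the fixed rectangle $\mathcal{R}_a$.

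For $p$-continuity on $\mathcal{R}_a$, I would prove the two semicontinuities separately.

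\emph{Upper semicontinuity.} The smooth compactly supported functions are dense in every $W_0^{1,q}$. Given $\varepsilon>0$, I would pick a symmetric compact set $\mathcal{A}$ of genus $\ge k$ inside a finite-dimensional subspace of $C_c^\infty(\mathcal{R}_a)$ with $\max_{\mathcal{A}}R_p\le\lambda_k(p)+\varepsilon$. To get it, take a near-optimal compact set and approximate it through a finite-dimensional odd map (a Schauder-type projection onto a finite-dimensional subspace of smooth functions), which keeps the genus since it stays away from $0$. On such an $\mathcal{A}$, $R_q[u]\to R_p[u]$ uniformly as $q\to p$, because it lies in a compact set of smooth functions. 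This gives $\limsup_{q\to p}\lambda_k(q)\le\lambda_k(p)$, and in particular the boundedness needed above.

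\emph{Lower semicontinuity.} This is the main obstacle. I would argue by contradiction: suppose $\lambda_k(p_n)\to\mu<\lambda_k(p)$. The direct approach needs a genus-$\ge k$ set for $p$ built from near-optimal sets for $p_n$, but those sets live in different spaces.

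My first attempt would take $p_n>p$ and $p_n<p$ separately. For $p_n\downarrow p$, Hölder gives $W_0^{1,p_n}\subset W_0^{1,p}$ on the bounded domain, and
\[
\|\nabla u\|_p/\|u\|_p\le |\mathcal{R}_a|^{1/p-1/p_n}\|\nabla u\|_{p_n}\cdot\|u\|_p^{-1}.
\]
The difficulty is that $\|u\|_p$ and $\|u\|_{p_n}$ are not comparable uniformly, so the inclusion alone does not control the quotient. Instead I would use Lemma~\ref{lem:compactness}. The critical values $\lambda_k(p_n)$ have eigenfunctions converging in $C^1$, so every limit $\mu$ is an eigenvalue at $p$. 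I would then combine this with a deformation/genus count: the sublevel sets $\{R_{p_n}\le\lambda_k(p_n)+\delta\}$ intersected with a $C^1$-bounded neighborhood of eigenfunctions are compact by Lemma~\ref{lem:regularity}. Their images under the inclusion into $W_0^{1,p}$ are symmetric and compact with genus $\ge k$, and $R_p$ on them converges uniformly to at most $\mu+2\delta$. This would give $\lambda_k(p)\le\mu$, a contradiction. I expect making this compactness-plus-deformation step rigorous to be the hardest part; otherwise I would cite \cite{lind} for continuity in $p$ on a fixed domain.
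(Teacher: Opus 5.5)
With the typo corrected as you do, your reduction is right and follows the paper's scheme. The stretching map $T_{a\to a'}$ gives the same two-sided bounds the paper gets from $\mathcal{R}_a^{a/b}\subset\mathcal{R}_a\subset\mathcal{R}_b$ plus scaling, and the triangle-inequality step is identical. Your upper semicontinuity argument in $p$, via finite-dimensional smooth approximation of near-optimal sets, is also fine.

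\textbf{The gap is the lower semicontinuity in $p$.} This is the only nontrivial ingredient; the paper does not prove it but invokes \cite{parini-continuity} (valid since $\mathcal{R}_a$ is Lipschitz) or \cite{degiovannimarzocchi}. Your sketch for it does not work, for three reasons.
\begin{itemize}
\item Lemma~\ref{lem:regularity} controls only eigenfunctions. It therefore gives no compactness for general elements of a sublevel set of $R_{p_n}$, nor for a neighborhood of eigenfunctions.
\item More fundamentally, such a set need not have genus $\ge k$. The deformation lemma works one level at a time and only yields $\gamma(K_c)\ge m$ when $\lambda_j=\dots=\lambda_{j+m-1}=c$. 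Already for $p=2$ with simple eigenvalues, the normalized eigenfunctions with eigenvalue $\le\lambda_k$ are $2k$ isolated points, and every small symmetric neighborhood of them has genus $1$.
\item Knowing from Lemma~\ref{lem:compactness} that $\mu$ is an eigenvalue at $p$ is useless, since $\mu$ could be a lower eigenvalue.
\end{itemize}
Your fallback citation is also wrong: \cite{lind} treats the first eigenvalue, not $\lambda_k$ for $k\ge 2$.

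You have also placed the difficulty on the wrong side. For $p_n\downarrow p$ the inclusion $W_0^{1,p_n}\subset W_0^{1,p}$ does work. On $L^{p_n}$-normalized sets with bounded energy, a uniform Sobolev bound in some $L^s$ with $s>p_n$, combined with interpolation, gives $\|u\|_p\ge 1-o(1)$ uniformly.

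The hard case is $p_n\uparrow p$. There the near-optimal sets lie in the larger space $W_0^{1,p_n}$ and must be mapped into $W_0^{1,p}$ by an odd continuous map that almost preserves the quotient; this is where domain regularity enters. On a rectangle you can do it by hand:
\begin{itemize}
\item Let $Su=(u\circ D_t)*\rho_\varepsilon$, where $(u\circ D_t)(z)=u(z_0+(z-z_0)/t)$, $z_0$ is the center of $\mathcal{R}_a$, $t<1$, and $\varepsilon$ is small relative to $1-t$.
\item Young's inequality gives $\|\nabla Su\|_p\le\|\rho_\varepsilon\|_{r_n}\|\nabla(u\circ D_t)\|_{p_n}$ with $1/r_n=1+1/p-1/p_n$. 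Here $\|\rho_\varepsilon\|_{r_n}=\varepsilon^{-2(1/p_n-1/p)}\|\rho\|_{r_n}\to 1$ for fixed $\varepsilon$.
\item On normalized sets of bounded energy, $\|Su\|_p$ stays uniformly close to $t^{2/p}\|u\|_p\ge t^{2/p}|\mathcal{R}_a|^{1/p-1/p_n}\|u\|_{p_n}$.
\end{itemize}
Letting $n\to\infty$, then $\varepsilon\to 0$, then $t\to 1$, yields $\lambda_k(p;\mathcal{R}_a)\le\liminf_n\lambda_k(p_n;\mathcal{R}_a)$.
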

\begin{proof}
Since $\mathcal{R}_a$ is a Lipschitz domain, 
the continuity of $p \mapsto \lambda_k(p;\mathcal{R}_a)$ is established by \textsc{Parini}  
\cite{parini-continuity}. 
Alternatively, the claim follows from the combination of \cite[Proposition~2.1 (e), Theorem~4.1 (a), (c), and Corollary~6.2]{degiovannimarzocchi}. 

Let us now consider the continuity of 
$a \mapsto \lambda_k(p;\mathcal{R}_a)$. 
Note that the domain monotonicity of eigenvalues implies that $a \mapsto \lambda_k(p;\mathcal{R}_a)$ is nonincreasing, and hence it is locally bounded. 
For any $b>a>0$, we have 
$$
\mathcal{R}_a^{a/b}
\subset 
\mathcal{R}_a
\subset 
\mathcal{R}_b,
$$
where we recall the notation $\mathcal{R}_a^{a/b} =(0,a) \times (0,a/b)$. 
Therefore, using the domain monotonicity and scaling property of $\lambda_k(p;\mathcal{R}_a)$, we get
$$
\left(\frac{b}{a}\right)^p \lambda_k(p;\mathcal{R}_b)
=
\lambda_k(p;\mathcal{R}_a^{a/b})
\geq
\lambda_k(p;\mathcal{R}_a)
\geq
\lambda_k(p;\mathcal{R}_b),
$$
and hence
\begin{equation}\label{eq:contni1}
0 \leq 
\lambda_k(p;\mathcal{R}_a)
-
\lambda_k(p;\mathcal{R}_b)
\leq
\left(\left(\frac{b}{a}\right)^p-1\right)
\lambda_k(p;\mathcal{R}_b)
\leq
\left(\left(\frac{b}{a}\right)^p-1\right)
\lambda_k(p;\mathcal{R}_a).
\end{equation}
From here the continuity of $a \mapsto \lambda_k(p;\mathcal{R}_a)$ follows.

Finally, we discuss the continuity of 
$(p,a) \mapsto \lambda_k(p;\mathcal{R}_a)$. 
Let us fix $p>1$ and $a>0$. 
For any $q>1$ and $b>a$, using the triangle inequality and \eqref{eq:contni1}, we get
\begin{align}
|\lambda_k(p;\mathcal{R}_a)
-
\lambda_k(q;\mathcal{R}_b)|
&\leq
|\lambda_k(p;\mathcal{R}_a)
-
\lambda_k(q;\mathcal{R}_a)|
+
|\lambda_k(q;\mathcal{R}_a)
-
\lambda_k(q;\mathcal{R}_b)|
\\
&\leq
\label{eq:lem:cont:proof:1}
|\lambda_k(p;\mathcal{R}_a)
-
\lambda_k(q;\mathcal{R}_a)|
+
\left(\left(\frac{b}{a}\right)^q-1\right)
\lambda_k(q;\mathcal{R}_a).
\end{align}
In view of the continuity of $p \mapsto \lambda_k(p;\mathcal{R}_a)$, for any $\varepsilon>0$ there exists $\delta>0$ such that $|\lambda_k(p;\mathcal{R}_a)
-
\lambda_k(q;\mathcal{R}_a)| < \varepsilon$ whenever $q \in (p-\delta,p+\delta)$.
Consequently, for such $q$,  
\eqref{eq:lem:cont:proof:1} yields
$$
|\lambda_k(p;\mathcal{R}_a)
-
\lambda_k(q;\mathcal{R}_b)|
< 
\varepsilon
+
\left(\left(\frac{b}{a}\right)^{p+\delta} - 1\right)
(\varepsilon + \lambda_k(p;\mathcal{R}_a)).
$$
Decreasing $\delta>0$ so that $\left(\left(\frac{b}{a}\right)^{p+\delta}-1\right)
(\varepsilon + \lambda_k(p;\mathcal{R}_a)) < \varepsilon$ for any $b \in (a,a+\delta)$, we arrive at $|\lambda_k(p;\mathcal{R}_a)
-
\lambda_k(q;\mathcal{R}_b)|<2\varepsilon$. 
Changing the roles of $a$ and $b$, it is not hard to obtain an analogous estimate for $b \in (a-\delta,a)$. 
This results in the desired continuity of the mapping $(p,a) \mapsto \lambda_k(p;\mathcal{R}_a)$. 
\end{proof}

\begin{lemma}\label{lem:differentiability:l1}
The mapping $(p,a) \mapsto \lambda_1(p;\mathcal{R}_a)$ is continuously differentiable in $(1,+\infty) \times (0,+\infty)$, and its partial derivatives satisfy
\begin{align}\label{eq:lem:differentiability:l11}
\partial_p \lambda_1(p;\mathcal{R}_a)
&=
p 
\int_{\mathcal{R}_a} |\nabla u|^p \ln |\nabla u| \,dz
-
p \,
\lambda_1(p;\mathcal{R}_a)
\int_{\mathcal{R}_a} |u|^p \ln |u| \,dz,\\
\label{eq:lem:differentiability:l12}
\partial_a \lambda_1(p;\mathcal{R}_a)
&=
-(p-1)
\int_0^1 |\partial_x u(a,y)|^p \,dy,
\end{align}
where $u$ is the first eigenfunction of the $p$-Laplacian in $\mathcal{R}_a$ normalized as $\|u\|_p=1$. 
\end{lemma}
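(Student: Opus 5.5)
The plan is to prove differentiability in each variable separately, using the simplicity of $\lambda_1$ and a two-sided variational comparison. Continuity of the resulting derivative formulas will come from Lemma~\ref{lem:compactness}. Throughout, $u=u_{p,a}$ denotes the positive first eigenfunction normalized by $\|u\|_p=1$. It is unique by simplicity and positivity of $\lambda_1$, so the formulas on the right-hand sides are well defined.

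\emph{Derivative in $p$.} Fix $a$ and compare $p$ with $q$ near $p$. Upper bound: use $|u_p|^{q/p}\,\mathrm{sgn}$-type competitors or simply $u_p$ itself in $R_q$. Since $u_p\in C^{1,\beta}(\overline{\mathcal{R}_a})$ by Lemma~\ref{lem:regularity}, the map $q\mapsto \int|\nabla u_p|^q$ is differentiable, with derivative $\int |\nabla u_p|^q\ln|\nabla u_p|$. The singularity at $\nabla u_p=0$ is harmless because $t^q\ln t$ is bounded near $0$. The same holds for $\int|u_p|^q$. This gives
\[
\lambda_1(q)\le R_q[u_p]=\lambda_1(p)+(q-p)\,\tfrac{d}{dq}R_q[u_p]\big|_{q=p}+o(q-p).
\]
Lower bound: use $u_q$ in $R_p$, so that $\lambda_1(p)\le R_p[u_q]$. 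Then $R_q[u_q]-R_p[u_q]=(q-p)\,\tfrac{d}{ds}R_s[u_q]\big|_{s=\xi}$ for some intermediate $\xi$.

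By Lemma~\ref{lem:compactness}, $u_q\to u_p$ in $C^1$. The eigenvalues stay bounded by Lemma~\ref{lem:continuity}, and the limit is again a positive normalized first eigenfunction, hence equals $u_p$. Therefore the derivative term converges to $\tfrac{d}{ds}R_s[u_p]|_{s=p}$. Dominated convergence applies via the uniform $C^1$ bounds together with boundedness of $t^s|\ln t|$ on $[0,C]$.

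Combining the two bounds for $q>p$ and $q<p$ yields the derivative. Expanding $\frac{d}{ds}$ of the quotient at $s=p$ with $\|u\|_p=1$ gives exactly \eqref{eq:lem:differentiability:l11}.

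\emph{Derivative in $a$.} I will use the Hadamard-type domain variation by horizontal stretching. For $b$ near $a$, set $v(x,y)=u_a(ax/b,y)\in W_0^{1,p}(\mathcal{R}_b)$. Then
\[
R_p[v;\mathcal{R}_b]=\frac{\int_{\mathcal{R}_a}\big((a/b)^2|\partial_xu|^2+|\partial_yu|^2\big)^{p/2}dz}{\int_{\mathcal{R}_a}|u|^p dz}.
\]
This quotient is differentiable in $b$ at $b=a$, with derivative $-\frac{p}{a}\int_{\mathcal{R}_a}|\nabla u|^{p-2}|\partial_x u|^2\,dz$. As before, I will obtain upper and lower bounds, using $u_a$ stretched to $\mathcal{R}_b$ and $u_b$ stretched back to $\mathcal{R}_a$. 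Lemma~\ref{lem:compactness} again gives $C^1$ convergence, which matches the two one-sided derivatives.

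It remains to convert $-\frac{p}{a}\int|\nabla u|^{p-2}|\partial_xu|^2$ into the boundary formula \eqref{eq:lem:differentiability:l12}. For this I will use a Pohozaev/Rellich-type identity obtained by testing the equation with $x\,\partial_x u$. This requires justification, since $u$ is only $C^{1,\beta}$. I would mollify, or use the known $W^{2,2}$-type regularity of $|\nabla u|^{p-2}\nabla u$ for $p$-Laplace solutions, applied to the odd extension $\tilde u$ from the proof of Lemma~\ref{lem:regularity}. The odd extension removes corner issues. Integrating by parts, and using $u=0$ on $\partial\mathcal{R}_a$ so that $\nabla u=\partial_xu\,e_1$ on the vertical sides, should give
\[
\int|\nabla u|^{p-2}|\partial_xu|^2-\tfrac1p\int|\nabla u|^p+\tfrac{\lambda}{p}\int|u|^p=\tfrac{p-1}{p}\,a\int_0^1|\partial_xu(a,y)|^pdy.
\]
The middle terms cancel since $\int|\nabla u|^p=\lambda$, which yields the stated formula with constant $-(p-1)$.

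\emph{Continuity of the derivatives.} Both partial derivatives are continuous in $(p,a)$. This follows from Lemma~\ref{lem:compactness} (full-sequence convergence, by uniqueness of the limit), the continuity of $\lambda_1$, and uniform $C^1$ convergence. The boundary trace in \eqref{eq:lem:differentiability:l12} converges uniformly, and the logarithmic integrands are dominated. Continuous partials then give $C^1$.

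The main obstacle is the rigorous Pohozaev identity in the $a$-derivative. The degeneracy $p>2$ and the limited regularity make testing with $x\partial_x u$ delicate. If it proves troublesome, I would instead compute the $a$-derivative with the interior formula. I would then derive the boundary form only via an approximation by regularized operators $-\mathrm{div}((\varepsilon+|\nabla u|^2)^{(p-2)/2}\nabla u)$, passing $\varepsilon\to0$.
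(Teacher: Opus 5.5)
\textbf{The $p$-derivative: your computation does not match the stated formula.} Your two-sided comparison for $\partial_p$ is sound. It is the explicit version of the Barbatis--Lamberti argument that the paper cites, with Lemma~\ref{lem:regularity} supplying the uniform regularity. Your continuity argument for the derivatives also matches the paper's.

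Your last sentence in that part is false, however. The quotient rule at $s=p$, with $\int_{\mathcal{R}_a}|u|^p\,dz=1$ and $\int_{\mathcal{R}_a}|\nabla u|^p\,dz=\lambda_1$, gives
\[
\partial_p\lambda_1(p;\mathcal{R}_a)=\int_{\mathcal{R}_a}|\nabla u|^p\ln|\nabla u|\,dz-\lambda_1(p;\mathcal{R}_a)\int_{\mathcal{R}_a}|u|^p\ln|u|\,dz .
\]
This is the right-hand side of \eqref{eq:lem:differentiability:l11} divided by $p$.

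Your computation is the correct one; the stated formula carries a spurious factor $p$. Two checks confirm this.
\begin{itemize}
\item On $(0,1)$, $\lambda_1(p)=(p-1)\bigl(2\pi/(p\sin(\pi/p))\bigr)^p$ has derivative $\pi^2\ln\pi$ at $p=2$. This is exactly the coefficient-one expression evaluated at $u=\sqrt2\sin(\pi x)$.
\item For the same formula on a general domain $\Omega$, scaling gives $\lambda_1(p;t\Omega)=t^{-p}\lambda_1(p;\Omega)$. Hence $\partial_p\lambda_1(p;t\Omega)=t^{-p}\bigl(\partial_p\lambda_1(p;\Omega)-\lambda_1(p;\Omega)\ln t\bigr)$. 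The coefficient-one expression transforms in exactly this way under $u\mapsto t^{-2/p}u(\cdot/t)$. With the prefactor $p$, the $\ln t$-term would instead come out as $-p\lambda_1\ln t$.
\end{itemize}
So state the derivative without the factor $p$ and flag the discrepancy, rather than claiming exact agreement.

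Nothing downstream is affected. The values in \eqref{eq:numvalues} are computed with the factor $2$, so they are twice the true derivatives. Both compared quantities carry the same factor, so the comparison in Lemma~\ref{lem:lambda-derivative} survives.

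\textbf{The $a$-derivative: a different and valid route.} The paper quotes the Hadamard formula of Garc\'ia Meli\'an--Sabina de Lis for $C^{2,\alpha}$ domains, with $R=a^{-1}(x,0)$. It then asserts, omitting details, that the formula survives on rectangles via odd extension and regularization.

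You instead pull back to a fixed rectangle by horizontal stretching. Differentiability then follows from the same elementary two-sided comparison as for $p$, with the interior formula $\partial_a\lambda_1=-\frac pa\int_{\mathcal{R}_a}|\nabla u|^{p-2}|\partial_xu|^2\,dz$. Only afterwards do you convert to the boundary form via the Pohozaev identity for the field $(x,0)$. I checked that identity and its constants, and it does yield \eqref{eq:lem:differentiability:l12}.

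What your route buys:
\begin{itemize}
\item Differentiability and continuity of $\partial_a\lambda_1$ become fully rigorous from Lemma~\ref{lem:compactness} alone.
\item All regularity issues are isolated in the Pohozaev step, which is needed only for the boundary form.
\item The paper later uses only the $C^1$ property (Corollary~\ref{cor:differentiability}, Lemma~\ref{lem:comparison}) together with explicit values at $p=2$. Your interior formula would already suffice there.
\end{itemize}
The paper's route produces the boundary form in one stroke. But its unwritten regularization is essentially the same justification you flag as your main obstacle.
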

\begin{proof}
The result of \textsc{Barbatis \& Lamberti} \cite[Theorem~9]{barbatis-lamberti} says that 
if $\Omega$ is a bounded $C^{1,\alpha}$-domain for some $\alpha \in (0,1)$, then $p \mapsto \lambda_1(p;\Omega)$  is continuously differentiable and 
\begin{equation}\label{eq:lambdapprime}
\partial_p \lambda_1(p;\Omega)
=
p \int_\Omega |\nabla v|^p \ln |\nabla v| \,dz
-
p \, 
\lambda_1(p;\Omega)
\int_\Omega |v|^p \ln |v| \,dz,
\end{equation}
where $v$ is the corresponding first eigenfunction normalized as $\|v\|_p=1$.
In our settings, $\Omega$ is a rectangle, which does not have the required $C^{1,\alpha}$-regularity.
However, the inspection of the proof of \cite[Theorem~9]{barbatis-lamberti} shows that the uniform $C^{1,\beta}(\overline{\mathcal{R}_{a}})$-regularity of eigenfunctions given by Lemma~\ref{lem:regularity} is sufficient for the validity of \eqref{eq:lambdapprime}, and hence \eqref{eq:lem:differentiability:l11} holds. 

The result of \textsc{Garc\'ia Meli\'an \& Sabina de Lis} \cite{garcia} (see also \cite{diblasio-lamberti}) says that if $\Omega$ is a $C^{2,\alpha}$-domain for some $\alpha \in (0,1)$ and if $T_\delta \in C^1(\overline{\Omega},\mathbb{R}^2)$ is a diffeomorphism of the form
$$
T_\delta(x,y) = (x,y) + \delta R(x,y),
$$
where $R \in C^1(\overline{\Omega},\mathbb{R}^2)$, then $\delta \mapsto \lambda_1(p;T_\delta(\Omega))$ is differentiable at zero and the following Hadamard shape derivative formula holds:
\begin{equation}\label{eq:lambdapprime3}
\left.
\partial_\delta
\lambda_1(p;T_\delta(\Omega))
\right|_{\delta=0}
=
-(p-1)
\int_{\partial\Omega} |\langle \nabla v, \nu\rangle|^p \langle R, \nu \rangle \,d\sigma,
\end{equation}
where the first eigenfunction $v$ is normalized as $\|v\|_p=1$ and $\nu$ is the unit outward normal vector to $\partial\Omega$. 
The regularity is imposed on $\Omega$ mainly to have the well-defined normal vector, to approximate $v$ in the $C^1(\overline{\Omega})$-topology by a sequence $\{v_\varepsilon\} \subset C^{2,\alpha}(\overline{\Omega})$ of solutions of a family of regularized problems, and to rigorously perform the integration by parts for each $v_\varepsilon$. 
In the present settings, although $\Omega$ is not $C^{2,\alpha}$-smooth, its rectangular shape allows to work with the antisymmetric extension $\tilde{v}$ of $v$, regularize $\tilde{v}$ in a way similar to  \cite{garcia} (see also \cite{diblasio-lamberti}) on a larger rectangle (cf.\ \eqref{eq:utilde}), and then restrict back to $\mathcal{R}_a$. 
We omit technical details. 
Taking $R(x,y) = a^{-1}(x,0)$, we have  $T_\delta(\mathcal{R}_a) = \mathcal{R}_{\alpha+\delta}$. 
With this choice, \eqref{eq:lambdapprime3} turns to \eqref{eq:lem:differentiability:l12}. 

Thanks to the formulas \eqref{eq:lem:differentiability:l11}, \eqref{eq:lem:differentiability:l12}, 
the continuity of the function $t \mapsto |t|^p \ln |t|$, and the fact that the only sign-constant eigenfunction of the $p$-Laplacian is the first eigenfunction (see, e.g., \cite{bellonikawohl}), we deduce from Lemma~\ref{lem:compactness} that the mappings $(p,a) \mapsto \partial_p \lambda_1(p;\mathcal{R}_a)$ and $(p,a) \mapsto \partial_a \lambda_1(p;\mathcal{R}_a)$ are continuous in $(1,+\infty) \times (0,+\infty)$.
This gives the desired continuous differentiability of $\lambda_1(p;\mathcal{R}_a)$ with respect to $p$ and $a$. 
\end{proof}

Arguing in much the same way as in Lemma~\ref{lem:differentiability:l1}, we also get the following result.
\begin{lemma}\label{lem:differentiability:ltriangle}
The mapping $p \mapsto \lambda_1(p;\mathcal{T}_1)$ is continuously differentiable in $(1,+\infty)$ and 
\begin{equation}\label{eq:lem:differentiability:ltriangle}
\partial_p \lambda_1(p;\mathcal{T}_1)
=
p 
\int_{\mathcal{T}_1} |\nabla u|^p \ln |\nabla u| \,dz
-
p \,
\lambda_1(p;\mathcal{T}_1)
\int_{\mathcal{T}_1} |u|^p \ln |u| \,dz,
\end{equation}
where $u$ is the first eigenfunction of the $p$-Laplacian in $\mathcal{T}_1$ normalized as $\|u\|_p=1$. 
\end{lemma}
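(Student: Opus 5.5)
The plan is to follow the proof of Lemma~\ref{lem:differentiability:l1}, first for the $p$-derivative formula and then for its continuity in $p$. The triangle $\mathcal{T}_1$ is not $C^{1,\alpha}$, so \cite[Theorem~9]{barbatis-lamberti} does not apply directly. As before, the only thing the proof of that theorem really needs is a uniform $C^{1,\beta}(\overline{\mathcal{T}_1})$ bound for normalized first eigenfunctions when $p$ ranges over compact subintervals of $(1,+\infty)$. Once such a bound is available, the Barbatis--Lamberti argument goes through. It compares $\lambda_1(q;\mathcal{T}_1)$ and $\lambda_1(p;\mathcal{T}_1)$ by inserting $v_p$ into $R_q$, and $v_q$ into $R_p$. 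Together with uniform convergence $v_q \to v_p$ in $C^1$, this yields \eqref{eq:lem:differentiability:ltriangle} as the limit of the difference quotients. The logarithmic integrals are finite because $|t|^p\ln|t|$ is continuous and bounded on bounded sets.

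The main step, and the expected obstacle, is this uniform regularity on $\mathcal{T}_1$. I would get it by reflection, as in Lemma~\ref{lem:regularity}. Let $u$ be a first eigenfunction in $\mathcal{T}_1$. Extend it oddly across the hypotenuse, i.e.\ across the line $\{x+y=1\}$, to the square $\mathcal{R}_1$. The extension is exactly an eigenfunction of $\lambda_{\boxbslashl}(p;\mathcal{R}_1)$ as in \eqref{eq:lambda=lambda3}.

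To see that it really solves \eqref{eq:D:weak} in $\mathcal{R}_1$, I would argue as in \cite[Theorem~1.2]{ADS}. The reflection is an isometry, and the $p$-Laplacian is invariant under orthogonal changes of variables. So one tests with the antisymmetric part of an arbitrary test function $\phi$, while the symmetric part contributes zero by oddness.

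After that, the eigenfunction on $\mathcal{R}_1$ is extended antisymmetrically to all of $\mathbb{R}^2$ as in Section~\ref{sec:preliminaries}. It then weakly satisfies \eqref{eq:utilde}. Two estimates then finish this step. The first is the uniform $L^\infty$ bound of \cite[Lemma~4.1]{lind}, using that $\lambda_1(p;\mathcal{T}_1)$ is locally bounded in $p$ by Lemma~\ref{lem:continuity}-type continuity on Lipschitz domains \cite{parini-continuity}. The second is Tolksdorf's interior estimate \cite{tolksdorf}, with $p$-continuous constants as noted in \cite{lind}. Together they give the desired uniform $C^{1,\beta}(\overline{\mathcal{T}_1})$ bound.

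For continuity of $p\mapsto \partial_p\lambda_1(p;\mathcal{T}_1)$, I would use an analogue of Lemma~\ref{lem:compactness} on $\mathcal{T}_1$, which follows from the uniform bound and Arzel\`a--Ascoli. Let $p_n\to p$. The normalized positive first eigenfunctions $u_n$ converge in $C^1(\overline{\mathcal{T}_1})$, up to a subsequence, to a normalized nonnegative eigenfunction associated with $\lim\lambda_1(p_n;\mathcal{T}_1)=\lambda_1(p;\mathcal{T}_1)$. This limit is the unique positive normalized first eigenfunction \cite{bellonikawohl}, so the whole sequence converges. Since $t\mapsto |t|^{p}\ln|t|$ depends continuously on $(t,p)$ uniformly on bounded sets, both integrals in \eqref{eq:lem:differentiability:ltriangle} pass to the limit. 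Hence $\partial_p\lambda_1$ is continuous.
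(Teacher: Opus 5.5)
Your route is the paper's, which proves this lemma only by ``arguing as in Lemma~\ref{lem:differentiability:l1}''. That means running the Barbatis--Lamberti comparison with uniform $C^{1,\beta}$ bounds, then getting continuity of the derivative from compactness plus simplicity of $\lambda_1$. You obtain the uniform $C^{1,\beta}(\overline{\mathcal{T}_1})$ bound by odd reflection across $\{x+y=1\}$ to the eigenfunction of \eqref{eq:lambda=lambda3} on $\mathcal{R}_1$, followed by the antisymmetric extension of Lemma~\ref{lem:regularity}. This is correct and fills in what the paper leaves implicit. Your splitting of test functions into symmetric and antisymmetric parts works, because the antisymmetric part restricted to $\mathcal{T}_1$ has zero trace on all three sides.

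The step that fails is the last one: the comparison argument does not yield \eqref{eq:lem:differentiability:ltriangle}. The inequalities $\lambda_1(q;\mathcal{T}_1)\le R_q[v_p;\mathcal{T}_1]$ and $\lambda_1(p;\mathcal{T}_1)\le R_p[v_q;\mathcal{T}_1]$, together with $v_q\to v_p$ in $C^1$, give
\begin{equation*}
\partial_p\lambda_1(p;\mathcal{T}_1)=\left.\frac{d}{dq}R_q[v_p;\mathcal{T}_1]\right|_{q=p}
=\int_{\mathcal{T}_1}|\nabla v_p|^p\ln|\nabla v_p|\,dz-\lambda_1(p;\mathcal{T}_1)\int_{\mathcal{T}_1}|v_p|^p\ln|v_p|\,dz .
\end{equation*}
This holds because $\frac{d}{dq}\int|f|^q\,dz=\int|f|^q\ln|f|\,dz$ and $\|v_p\|_p=1$, so there is no prefactor $p$.

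The factor $p$ in the displayed formula, inherited from \eqref{eq:lem:differentiability:l11}, is spurious. For instance, on $(0,1)$ one has $\lambda_1(p)=(p-1)\pi_p^p$ with $\pi_p=2\pi/(p\sin(\pi/p))$. Its derivative at $p=2$ is $\pi^2\ln\pi$. The formula above gives exactly this for $u=\sqrt{2}\sin(\pi x)$, whereas the version with the factor $p$ gives $2\pi^2\ln\pi$.

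So you should state and prove the corrected identity, rather than assert that the computation reproduces the displayed one. For the paper this is harmless. In Lemma~\ref{lem:lambda-derivative} both derivatives are taken at $p=2$ and change by the same factor $2$. The values in \eqref{eq:numvalues} are therefore halved but remain strictly ordered.
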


Recalling \eqref{eq:lambda=lambda}, \eqref{eq:lambda=lambda2}, and \eqref{eq:lambda=lambda3}, we obtain the following corollary.
\begin{corollary}\label{cor:differentiability}
The mappings $(p,a) \mapsto \lambda_{\boxbarl}(p;\mathcal{R}_a)$ and $(p,a) \mapsto \lambda_{\boxminus}(p;\mathcal{R}_a)$ are continuously differentiable in $(1,+\infty) \times (0,+\infty)$. 
Moreover, the mapping $p \mapsto \lambda_{\boxbslashl}(p;\mathcal{R}_1)$ is continuously differentiable in $(1,+\infty)$.
\end{corollary}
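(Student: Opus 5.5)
The corollary follows by composing Lemma~\ref{lem:differentiability:l1} and Lemma~\ref{lem:differentiability:ltriangle} with the identities \eqref{eq:lambda=lambda}, \eqref{eq:lambda=lambda2}, and \eqref{eq:lambda=lambda3}. No new analytic input is needed; the work is only in checking that these identities really express the three eigenvalues through $\lambda_1$ of smooth-in-parameter domains.

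First, I will justify the identities. For \eqref{eq:lambda=lambda}, let $v$ be the first eigenfunction of $\mathcal{R}_{a/2}$. Its odd reflection across $\{x=a/2\}$ is an eigenfunction in $\mathcal{R}_a$ with the same eigenvalue, by the same reflection argument as for \eqref{eq:utilde}. Conversely, an eigenfunction of the type $\boxbarl$ restricts on each half to a sign-constant eigenfunction, which is a first eigenfunction by \cite{bellonikawohl}. Hence $\lambda_{\boxbarl}(p;\mathcal{R}_a)$ is well defined and equals $\lambda_1(p;\mathcal{R}_{a/2})$. The same argument, reflecting across $\{y=1/2\}$ and then scaling by the factor $2$ (which multiplies the Rayleigh quotient by $2^p$), gives \eqref{eq:lambda=lambda2}. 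For \eqref{eq:lambda=lambda3}, I reflect oddly across the diagonal, using the invariance of $|\nabla u|$ under this isometry.

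Next, for $\lambda_{\boxbarl}$, I compose $(p,a)\mapsto \lambda_1(p;\mathcal{R}_{a/2})$ with the smooth map $(p,a)\mapsto(p,a/2)$ of $(1,+\infty)\times(0,+\infty)$ onto itself. The chain rule and the $C^1$ regularity from Lemma~\ref{lem:differentiability:l1} then give continuous differentiability.

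For $\lambda_{\boxminus}$, I write it as the product $2^p\,\lambda_1(p;\mathcal{R}_{2a})$. The factor $p\mapsto 2^p$ is smooth, and $(p,a)\mapsto \lambda_1(p;\mathcal{R}_{2a})$ is $C^1$ by the same composition argument. The product of $C^1$ functions is $C^1$.

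Finally, the statement for $\lambda_{\boxbslashl}(p;\mathcal{R}_1)$ is exactly Lemma~\ref{lem:differentiability:ltriangle} combined with \eqref{eq:lambda=lambda3}.

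The only point needing any care is the well-definedness step. There I must make sure that $\lambda_{\boxbarl}$ and its analogues denote a single value and not a set, and this is exactly what the uniqueness of sign-constant eigenfunctions from \cite{bellonikawohl} supplies.
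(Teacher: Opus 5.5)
Your proposal is correct and follows the paper's route: the paper obtains the corollary by combining Lemmas~\ref{lem:differentiability:l1} and \ref{lem:differentiability:ltriangle} with the identities \eqref{eq:lambda=lambda}, \eqref{eq:lambda=lambda2}, \eqref{eq:lambda=lambda3}, which amounts to your chain-rule argument together with the smooth factor $2^p$. Your justification of those identities, via odd reflection and the uniqueness of sign-constant eigenfunctions, goes beyond the paper, which simply takes them as holding by definition; it is sound.
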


\subsection{Square \texorpdfstring{$\mathcal{R}_1$}{}}
It is well-known that 
the multiplicity of the second eigenvalue of the Laplacian in $\mathcal{R}_1$ is exactly two, that is,
$$
\lambda_1(2;\mathcal{R}_1)
<
\lambda_2(2;\mathcal{R}_1)
=
\lambda_3(2;\mathcal{R}_1)
<
\lambda_4(2;\mathcal{R}_1),
$$
and we have
\begin{align}
\label{eq:lrrl1}
\lambda_2(2;\mathcal{R}_1)
=
\lambda_{\boxbslashl}(2;\mathcal{R}_1)
=
\lambda_{\boxbarl}(2;\mathcal{R}_1)
=
\lambda_{\boxminus}(2;\mathcal{R}_1).
\end{align}
We would like to provide a result related to \eqref{eq:lrrl1} for the $p$-Laplacian in $\mathcal{R}_1$. 
\begin{lemma}\label{lem:lambda-derivative}
For any $p$ sufficiently close to $2$, we have
\begin{align}\label{eq:lem:lambda-derivative:1}
&\lambda_2(p;\mathcal{R}_1) 
\leq
\lambda_{\boxbarl}(p;\mathcal{R}_1)
=
\lambda_{\boxminus}(p;\mathcal{R}_1)
<
\lambda_{\boxbslashl}(p;\mathcal{R}_1)
\quad \text{if}~ p < 2,\\
\label{eq:lem:lambda-derivative:2}
&\lambda_2(p;\mathcal{R}_1) 
\leq
\lambda_{\boxbslashl}(p;\mathcal{R}_1)
<
\lambda_{\boxbarl}(p;\mathcal{R}_1)
=
\lambda_{\boxminus}(p;\mathcal{R}_1)
\quad \text{if}~ p > 2. 
\end{align} 
\end{lemma}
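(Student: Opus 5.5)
The plan has three ingredients: an equality coming from symmetry of the square, the left-hand inequalities $\lambda_2 \le \cdots$ coming from the characterization of $\lambda_2$, and the strict inequalities coming from a comparison of $p$-derivatives at $p=2$.

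\emph{Equality.} First I will show that $\lambda_{\boxbarl}(p;\mathcal{R}_1)=\lambda_{\boxminus}(p;\mathcal{R}_1)$ for every $p>1$. By \eqref{eq:lambda=lambda} and \eqref{eq:lambda=lambda2}, $\lambda_{\boxbarl}(p;\mathcal{R}_1)=\lambda_1(p;\mathcal{R}_{1/2})$ and $\lambda_{\boxminus}(p;\mathcal{R}_1)=\lambda_1(p;\mathcal{R}_1^{1/2})$. The rectangles $(0,1/2)\times(0,1)$ and $(0,1)\times(0,1/2)$ are congruent via the reflection $(x,y)\mapsto(y,x)$, and $R_p$ is invariant under this reflection, so the two first eigenvalues coincide.

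\emph{Inequality with $\lambda_2$.} Next I will show that $\lambda_2(p;\mathcal{R}_1)$ is bounded above by each of these eigenvalues. Each of $\lambda_{\boxbarl}$, $\lambda_{\boxminus}$, $\lambda_{\boxbslashl}$ is an eigenvalue of $\mathcal{R}_1$ whose eigenfunction has exactly two nodal domains, each carrying a first eigenfunction. The standard argument (e.g.\ \cite{cuesta}) then applies: the set $\{\alpha u^+ + \beta u^- : |\alpha|^p\|u^+\|_p^p + |\beta|^p\|u^-\|_p^p = 1\}$ is a symmetric compact set homeomorphic to $S^1$, so it has genus $2$. On this set the Rayleigh quotient equals the eigenvalue identically, because $u^\pm$ are eigenfunctions on their nodal domains with the same eigenvalue. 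Hence $\lambda_2(p;\mathcal{R}_1)\le\lambda_\star(p;\mathcal{R}_1)$ for each of the three symbols $\star$, and this holds for every $p$.

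\emph{Strict inequalities.} Finally I will compare $\lambda_{\boxbarl}$ and $\lambda_{\boxbslashl}$ near $p=2$. By \eqref{eq:lrrl1} they coincide at $p=2$, and by Corollary~\ref{cor:differentiability} both are $C^1$ in $p$. It therefore suffices to show
\[
\partial_p \lambda_1(p;\mathcal{R}_{1/2})\big|_{p=2} > \partial_p \lambda_1(p;\mathcal{T}_1)\big|_{p=2}.
\]
If this holds, the difference $\lambda_{\boxbarl}-\lambda_{\boxbslashl}$ vanishes at $p=2$ and has positive derivative there, so it is negative for $p<2$ and positive for $p>2$ close to $2$. That gives \eqref{eq:lem:lambda-derivative:1} and \eqref{eq:lem:lambda-derivative:2}.

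To compute the derivatives, I will use \eqref{eq:lem:differentiability:l11} and \eqref{eq:lem:differentiability:ltriangle} at $p=2$, where the eigenfunctions are explicit.
\begin{itemize}
\item On $\mathcal{R}_{1/2}$: $u=c\sin(2\pi x)\sin(\pi y)$ with $\lambda=5\pi^2$ and $c=\sqrt{8}$, since $\|u\|_2=1$ on an area of $1/2$.
\item On $\mathcal{T}_1$: the restriction of $\sin(\pi x)\sin(2\pi y)-\sin(2\pi x)\sin(\pi y)$, suitably normalized, with the same $\lambda=5\pi^2$.
\end{itemize}
Write $\lambda=5\pi^2$ for the common value. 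The quantity to compare is then
\[
D:=2\int|\nabla u|^2\ln|\nabla u|\,dz-2\lambda\int u^2\ln|u|\,dz
\]
for each domain.

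This explicit evaluation is the main obstacle. The rectangle integrals separate into products of one-dimensional integrals of the type $\int_0^\pi \sin^2 t\,\ln|\sin t|\,dt=\pi\bigl(\tfrac14-\tfrac12\ln 2\bigr)$. The exception is the $|\nabla u|^2\ln|\nabla u|$ term, which involves $\ln(4\cos^2(2\pi x)\sin^2(\pi y)+\sin^2(2\pi x)\cos^2(\pi y))$ and does not separate. The triangle integrals do not separate either.

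I would first try to make the comparison robust rather than exact. Both eigenfunctions live on the square: they are the extensions $\sin(2\pi x)\sin(\pi y)$ and $\sin(\pi x)\sin(2\pi y)-\sin(2\pi x)\sin(\pi y)$, the latter normalized by $1/\sqrt2$. By symmetry, $D$ can be computed on the whole square $\mathcal{R}_1$ with $L^2(\mathcal{R}_1)$ normalization, up to the same additive constant coming from the normalization. So the task reduces to comparing two explicit double integrals over $(0,1)^2$. I would evaluate these with rigorous numerics (interval quadrature) to confirm the sign of the difference. The sign must be such that $\partial_p\lambda_{\boxbarl}>\partial_p\lambda_{\boxbslashl}$, as the claimed inequalities require.

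If a fully analytic route is wanted, I would rewrite the gradient terms in the form $\int f\ln f$ with $f=|\nabla u|^2$ and compare them via the entropy-type functional. I expect, however, that a verified numerical evaluation of the two integrals is the realistic way to settle the strict sign.
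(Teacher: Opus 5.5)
Your reduction is the paper's own: the equality $\lambda_{\boxbarl}(p;\mathcal{R}_1)=\lambda_{\boxminus}(p;\mathcal{R}_1)$ follows from the reflection $(x,y)\mapsto(y,x)$. The bound by $\lambda_2(p;\mathcal{R}_1)$ also works; your genus-two set spanned by $u^+$ and $u^-$ is the elementary half of the Bobkov--Parini characterization that the paper quotes. The strict inequalities then come from comparing $\partial_p\lambda_1(p;\mathcal{R}_{1/2})$ and $\partial_p\lambda_1(p;\mathcal{T}_1)$ at $p=2$ through \eqref{eq:lem:differentiability:l11} and \eqref{eq:lem:differentiability:ltriangle}, evaluated on the whole square with the explicit Laplacian eigenfunctions.

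The gap is that you never determine the sign of that difference. Saying that ``the sign must be such that $\partial_p\lambda_{\boxbarl}>\partial_p\lambda_{\boxbslashl}$, as the claimed inequalities require'' assumes what is to be proved. This sign is the whole content of the strict inequalities. With the opposite sign, \eqref{eq:lem:lambda-derivative:1} and \eqref{eq:lem:lambda-derivative:2} would hold with $\lambda_{\boxbarl}$ and $\lambda_{\boxbslashl}$ interchanged, and with equal derivatives nothing would follow. Nothing in your proposal forces the sign. The paper closes this step by computing the two integrals with SciPy's \texttt{dblquad}. It obtains approximately $176.0407$ for $\mathcal{R}_{1/2}$ and $171.8571$ for $\mathcal{T}_1$, a margin of about $4.2$. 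It gives no validated error bounds, so your interval-quadrature plan would make this step more rigorous than the paper's. Until some such evaluation is carried out, however, your argument is incomplete.

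There are also three smaller points.
\begin{itemize}
\item The function $\sin(\pi x)\sin(2\pi y)-\sin(2\pi x)\sin(\pi y)$ vanishes on the diagonal $y=x$, not on the hypotenuse $x+y=1$ of $\mathcal{T}_1$. Its restriction to $\mathcal{T}_1$ is therefore not the Dirichlet eigenfunction there. It is the first eigenfunction of the congruent triangle $\{0<x<y<1\}$, and $x\mapsto 1-x$ carries it to the paper's $u_2$ from \eqref{eq:u2}, up to the factor $2$. Once you integrate over the whole square, nothing changes.
\item No additive constant comes from the normalization. Since $\int|\nabla u|^p\,dz=\lambda\int|u|^p\,dz$, the right-hand side of \eqref{eq:lem:differentiability:l11} is homogeneous of degree $p$ in $u$. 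The same expression computed over $\mathcal{R}_1$ with $\|U\|_{2;\mathcal{R}_1}=1$ therefore equals the derivative exactly. What you must not do is leave $\sin(2\pi x)\sin(\pi y)$ (norm $1/2$) unnormalized while normalizing the diagonal mode (norm $1/\sqrt2$).
\item The evaluation is easier than you expect. Write $\sin(2\pi x)\sin(\pi y)+\sin(\pi x)\sin(2\pi y)=4\sin(\pi x)\sin(\pi y)\cos\frac{\pi(x+y)}{2}\cos\frac{\pi(x-y)}{2}$ and use the Fourier series of $\ln|\sin t|$ and $\ln|\cos t|$. The zeroth-order terms then come out explicitly, with both modes normalized in $L^2(\mathcal{R}_1)$: $\int_{\mathcal{R}_1}U^2\ln|U|\,dz=1-\ln 2$ for the rectangle mode and $\frac{17}{12}-\frac32\ln 2$ for the diagonal mode. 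Only the two integrals $\int_{\mathcal{R}_1}|\nabla U|^2\ln|\nabla U|^2\,dz$ then require numerics.
\end{itemize}
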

\begin{proof}
The first inequalities in \eqref{eq:lem:lambda-derivative:1} and \eqref{eq:lem:lambda-derivative:2} directly follow from the definitions \eqref{eq:lambda=lambda} and \eqref{eq:lambda=lambda3} of $\lambda_{\boxbarl}(p;\mathcal{R}_1)$ and $\lambda_{\boxbslashl}(p;\mathcal{R}_1)$, and 
the minimax characterization of $\lambda_2(p;\mathcal{R}_1)$ (see, e.g., \cite[Remark~4]{bobkovparini}):
$$
\lambda_2(p;\mathcal{R}_1)
=
\inf
\left\{
\max\{\lambda_1(p;\Omega_1), 
\lambda_1(p;\Omega_2)\}:~
\Omega_1, \Omega_2 \subset \mathcal{R}_1 ~\text{are Lipschitz and disjoint}
\right\}.
$$
It is also evident that 
$$
\lambda_{\boxbarl}(p;\mathcal{R}_1)
=
\lambda_{\boxminus}(p;\mathcal{R}_1).
$$
These facts are valid for any $p>1$. 

Let us now prove the strict inequalities in \eqref{eq:lem:lambda-derivative:1} and \eqref{eq:lem:lambda-derivative:2}, which is the main assertion. 
Note that $\lambda_2(2;\mathcal{R}_1) = 5\pi^2$. 
Let us choose the following two second eigenfunctions of the Laplacian in $\mathcal{R}_1$:
\begin{align}
\label{eq:u1}
u_1(x,y) &= \sqrt{8} \sin(2 \pi x) \sin(\pi y),\\
\label{eq:u2}
u_2(x,y) &= 2 \left(\sin(2 \pi x) \sin(\pi y) + \sin(\pi x) \sin(2\pi y) \right).
\end{align}
Clearly, $u_1$ and $u_2$ are the first eigenfunctions of the Laplacian in $\mathcal{R}_{1/2}$ and $\mathcal{T}_1$, respectively. 
The factors $\sqrt{8}$ and $2$ are chosen to make $\|u_1\|_{2;\mathcal{R}_{1/2}} = 1$ and $\|u_2\|_{2;\mathcal{T}_1} = 1$.

By Lemmas~\ref{lem:differentiability:l1} and \ref{lem:differentiability:ltriangle}, $p \mapsto \lambda_1(p;\mathcal{R}_{1/2})$ and $p \mapsto \lambda_1(p;\mathcal{T}_{1})$ are differentiable at $p=2$, and their derivatives are given as 
\begin{align}\label{eq:lambdapprime2}
\left.\partial_p \lambda_1(p;\mathcal{R}_{1/2}) \right|_{p=2}
&=
2 \int_{\mathcal{R}_{1/2}} |\nabla u_1|^2 \ln |\nabla u_1| \,dz
-
2 
\lambda_1(p;\mathcal{R}_{1/2})
\int_{\mathcal{R}_{1/2}} u_1^2 \ln u_1 \,dz\\
&=\int_{\mathcal{R}_{1}} |\nabla u_1|^2 \ln |\nabla u_1| \,dz
-
5 \pi^2 
\int_{\mathcal{R}_{1}} u_1^2 \ln |u_1| \,dz,
\end{align}
and
\begin{align}\label{eq:lambdapprime4}
\left.\partial_p \lambda_1(p;\mathcal{T}_1) \right|_{p=2}
&=
2 \int_{\mathcal{T}_1} |\nabla u_2|^2 \ln |\nabla u_2| \,dz
-
2 
\lambda_1(p;\mathcal{T}_1)
\int_{\mathcal{T}_1} u_2^2 \ln u_2 \,dz\\
&=\int_{\mathcal{R}_{1}} |\nabla u_2|^2 \ln |\nabla u_2| \,dz
-
5 \pi^2 
\int_{\mathcal{R}_{1}} u_2^2 \ln |u_2| \,dz.
\end{align}
Since the functions $u_1$, $u_2$, $|\nabla u_1|$, $|\nabla u_2|$, and their compositions with $t \mapsto t^2 \ln |t|$, are all nonoscillatory and finite in $\overline{\mathcal{R}_1}$, we use the SciPy numerical integration routine \texttt{dblquad} to find that
\begin{equation}\label{eq:numvalues}
\left.\partial_p \lambda_1(p;\mathcal{R}_{1/2}) \right|_{p=2}
\approx
176.0407,
\quad\text{and} \quad 
\left.\partial_p \lambda_1(p;\mathcal{T}_1) \right|_{p=2}
\approx
171.8571.
\end{equation}
(Although we do not provide validated upper and lower bounds for the obtained values, by the nature of integrands we consider them reliable.)
That is, the derivatives are strictly ordered. 
Recalling that 
$\lambda_1(2;\mathcal{R}_{1/2}) = \lambda_1(2;\mathcal{T}_1)$ by 
\eqref{eq:lambda=lambda}, \eqref{eq:lambda=lambda3}, 
and \eqref{eq:lrrl1}, we obtain the desired strict inequalities in \eqref{eq:lem:lambda-derivative:1} and \eqref{eq:lem:lambda-derivative:2}.
\end{proof}

\subsection{Rectangles \texorpdfstring{$\mathcal{R}_a$}{}}
Since any eigenvalue of the Laplacian in the rectangle $\mathcal{R}_a$ has the form $\pi^2 \left(i^2 a^{-2} + j^2\right)$ for some $i,j \in \mathbb{N}$, one can easily derive the following result.
\begin{lemma}\label{lem:multiplicity}
Let $a \in (1,\sqrt{8/3})$. 
Then $\lambda_2(2;\mathcal{R}_a)$ and $\lambda_2(3;\mathcal{R}_a)$ are simple, and 
\begin{equation}\label{eq:lem:multiplicity}
\lambda_{\boxbarl}(2;\mathcal{R}_a)
=
\lambda_2(2;\mathcal{R}_a) 
< 
\lambda_3(2;\mathcal{R}_a)
=
\lambda_{\boxminus}(2;\mathcal{R}_a)
<
\lambda_4(2;\mathcal{R}_a).
\end{equation}
\end{lemma}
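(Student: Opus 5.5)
The plan is a direct computation with the explicit spectrum of the Dirichlet Laplacian on $\mathcal{R}_a$. I read the assertion about ``$\lambda_2(3;\mathcal{R}_a)$'' as a typo for $\lambda_3(2;\mathcal{R}_a)$, which is what \eqref{eq:lem:multiplicity} requires. At $p=2$ the LS values \eqref{highereigenvalues} coincide with the Courant--Fisher values \eqref{eq:eigenvalue_linear}. The Dirichlet eigenfunctions $\sin(i\pi x/a)\sin(j\pi y)$, $i,j\in\mathbb{N}$, form a complete orthogonal system, so the spectrum counted with multiplicity is exactly the multiset $\{\pi^2(i^2a^{-2}+j^2)\}_{i,j\in\mathbb{N}}$. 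It therefore suffices to order the numbers $\mu_{ij}:=i^2a^{-2}+j^2$ and to show that the four smallest values are attained by a single pair $(i,j)$ each.

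\textbf{Step 1 (ordering).} I will list the candidates $\mu_{11}=a^{-2}+1$, $\mu_{21}=4a^{-2}+1$, $\mu_{12}=a^{-2}+4$, $\mu_{31}=9a^{-2}+1$, $\mu_{22}=4a^{-2}+4$. The comparisons are:
\begin{itemize}
\item $\mu_{11}<\mu_{21}$ holds trivially;
\item $\mu_{21}<\mu_{12}$ is equivalent to $3a^{-2}<3$, that is, $a>1$;
\item $\mu_{12}<\mu_{31}$ is equivalent to $3<8a^{-2}$, that is, $a^2<8/3$;
\item $\mu_{12}<\mu_{22}$ holds trivially.
\end{itemize}
All remaining pairs are larger than $\mu_{12}$: if $j\ge3$ then $\mu_{ij}>9>\mu_{12}$; if $j=1$ and $i\ge3$ then $\mu_{ij}\ge\mu_{31}$; if $j=2$ and $i\ge2$ then $\mu_{ij}\ge\mu_{22}$. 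Hence, for $a\in(1,\sqrt{8/3})$, the three smallest values are attained only by $(1,1)$, $(2,1)$, and $(1,2)$, in that order, and everything else is at least $\min\{\mu_{31},\mu_{22}\}>\mu_{12}$. This gives
\[
\lambda_2(2;\mathcal{R}_a)=\pi^2\mu_{21}<\lambda_3(2;\mathcal{R}_a)=\pi^2\mu_{12}<\lambda_4(2;\mathcal{R}_a),
\]
together with the simplicity of $\lambda_2(2;\mathcal{R}_a)$ and $\lambda_3(2;\mathcal{R}_a)$.

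\textbf{Step 2 (identification).} By \eqref{eq:lambda=lambda}, $\lambda_{\boxbarl}(2;\mathcal{R}_a)=\lambda_1(2;\mathcal{R}_{a/2})=\pi^2(4a^{-2}+1)$. Its eigenfunction is $\sin(2\pi x/a)\sin(\pi y)$, which has opposite constant signs on the two halves of $\mathcal{R}_a$ separated by $\{x=a/2\}$. Likewise, by \eqref{eq:lambda=lambda2}, $\lambda_{\boxminus}(2;\mathcal{R}_a)=\lambda_1(2;\mathcal{R}_a^{1/2})=\pi^2(a^{-2}+4)$. Combining with Step 1 yields \eqref{eq:lem:multiplicity}.

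The only mildly delicate point is Step 1's exhaustiveness check, namely making sure that no pair $(i,j)$ outside the listed ones coincides with $\mu_{21}$ or $\mu_{12}$. The three-case split by the value of $j$ handles this, and it is exactly where both endpoint conditions $a>1$ and $a<\sqrt{8/3}$ enter.
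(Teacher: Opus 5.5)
Your proposal is correct and follows the same route as the paper, which simply invokes the explicit Dirichlet spectrum $\pi^2(i^2a^{-2}+j^2)$ on $\mathcal{R}_a$ and leaves the ordering to the reader. Your comparisons of $\mu_{11},\mu_{21},\mu_{12},\mu_{31},\mu_{22}$, the case split on $j$, and your reading of $\lambda_2(3;\mathcal{R}_a)$ as a typo for $\lambda_3(2;\mathcal{R}_a)$ (which is how the lemma is used in Steps II and III) fill in those details correctly.
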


In view of 
the continuity with respect to $p$ stated in  Corollary~\ref{cor:differentiability}, and the definition of $\lambda_{\boxbarl}(p;\mathcal{R}_a)$, $\lambda_{\boxminus}(p;\mathcal{R}_a)$, we  have the following result. 
\begin{lemma}\label{lem:convergence}
For any $a \in (1,\sqrt{8/3})$, 
$$
\lambda_{\boxbarl}(p;\mathcal{R}_a) 
\to 
\lambda_{2}(2;\mathcal{R}_a)
\quad \text{and} \quad 
\lambda_{\boxminus}(p;\mathcal{R}_a) 
\to 
\lambda_{3}(2;\mathcal{R}_a)
\quad \text{as}~ p \to 2.
$$
\end{lemma}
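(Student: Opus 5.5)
The plan is to get the statement from the joint continuity in Corollary~\ref{cor:differentiability}, combined with the explicit values at $p=2$ from Lemma~\ref{lem:multiplicity}.

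By \eqref{eq:lambda=lambda} and \eqref{eq:lambda=lambda2}, we have $\lambda_{\boxbarl}(p;\mathcal{R}_a)=\lambda_1(p;\mathcal{R}_{a/2})$ and $\lambda_{\boxminus}(p;\mathcal{R}_a)=2^p\lambda_1(p;\mathcal{R}_{2a})$. By Corollary~\ref{cor:differentiability}, both are continuous in $p$ for fixed $a\in(1,\sqrt{8/3})$. It therefore suffices to identify their values at $p=2$. For $p=2$, the first eigenvalue of the Laplacian in $\mathcal{R}_{a/2}$ is $\pi^2(4a^{-2}+1)$, attained by $\sin(2\pi x/a)\sin(\pi y)$ restricted to the half-rectangle. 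Similarly, $4\lambda_1(2;\mathcal{R}_{2a})=4\pi^2((2a)^{-2}+1)=\pi^2(a^{-2}+4)$, attained by $\sin(\pi x/a)\sin(2\pi y)$. So $\lambda_{\boxbarl}(2;\mathcal{R}_a)=\pi^2(4a^{-2}+1)$ and $\lambda_{\boxminus}(2;\mathcal{R}_a)=\pi^2(a^{-2}+4)$.

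Next, list the Dirichlet eigenvalues $\pi^2(i^2a^{-2}+j^2)$ for $a\in(1,\sqrt{8/3})$. The smallest is $(1,1)$, giving $\pi^2(a^{-2}+1)$. The next two are $(2,1)$, giving $\pi^2(4a^{-2}+1)$, and $(1,2)$, giving $\pi^2(a^{-2}+4)$. Since $a>1$, we have $4a^{-2}+1<a^{-2}+4$, so $(2,1)$ comes before $(1,2)$. For the fourth, the competitors are $(3,1)$, giving $9a^{-2}+1$, and $(2,2)$, giving $4a^{-2}+4$. Now $9a^{-2}+1>a^{-2}+4$ exactly when $a^2<8/3$, and $(2,2)$ is trivially larger. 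All other pairs are larger still. This yields \eqref{eq:lem:multiplicity}, the identification of $\lambda_2(2;\mathcal{R}_a)$ and $\lambda_3(2;\mathcal{R}_a)$, and their simplicity; $\lambda_2(2;\mathcal{R}_a)$ is simple by the strict inequalities. Letting $p\to2$ and using continuity, $\lambda_{\boxbarl}(p;\mathcal{R}_a)\to\lambda_{\boxbarl}(2;\mathcal{R}_a)=\lambda_2(2;\mathcal{R}_a)$, and likewise $\lambda_{\boxminus}(p;\mathcal{R}_a)\to\lambda_3(2;\mathcal{R}_a)$.

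There is no real obstacle here. The only point needing care is the bookkeeping of which pairs $(i,j)$ are admissible, which is where the bound $a<\sqrt{8/3}$ enters. I would also note that the function $\lambda_2(3;\mathcal{R}_a)$ in Lemma~\ref{lem:multiplicity} is presumably a typo for $\lambda_3(2;\mathcal{R}_a)$; this is irrelevant for the convergence claim.
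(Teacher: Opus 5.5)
Your proof is correct and follows the paper's own argument: continuity in $p$ from Corollary~\ref{cor:differentiability}, combined with the identifications $\lambda_{\boxbarl}(2;\mathcal{R}_a)=\lambda_2(2;\mathcal{R}_a)$ and $\lambda_{\boxminus}(2;\mathcal{R}_a)=\lambda_3(2;\mathcal{R}_a)$ from Lemma~\ref{lem:multiplicity}. The only addition is that you check the $(i,j)$ bookkeeping behind Lemma~\ref{lem:multiplicity} explicitly, and you are right that $\lambda_2(3;\mathcal{R}_a)$ there should read $\lambda_3(2;\mathcal{R}_a)$.
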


Thanks to the continuity with respect to $a$ stated in Lemma~\ref{lem:continuity} and Corollary~\ref{cor:differentiability}, we also get the following extension of Lemma~\ref{lem:lambda-derivative} to $\mathcal{R}_a$.
\begin{lemma}\label{lem:rectangle}
For any $p>2$ sufficiently close to $2$, there exists $\sigma>0$ such that for any $a \in [1-\sigma,1+\sigma]$ we have
\begin{equation}\label{eq:lem:rectangle}
\lambda_2(p;\mathcal{R}_a)
<
\lambda_{\boxbarl}(p;\mathcal{R}_a).
\end{equation}
\end{lemma}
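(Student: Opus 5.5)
Fix $p>2$ close enough to $2$ that Lemma~\ref{lem:lambda-derivative} gives the strict inequality $\lambda_{\boxbslashl}(p;\mathcal{R}_1)<\lambda_{\boxbarl}(p;\mathcal{R}_1)$. We then transport this strict inequality from the square to nearby rectangles by continuity in $a$. The key point is that a strict inequality is preserved under small perturbations, so we first need an upper bound for $\lambda_2(p;\mathcal{R}_a)$ that does not depend on the square's diagonal symmetry.

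\emph{Step 1: an upper bound for $\lambda_2$ on $\mathcal{R}_a$.} Split $\mathcal{R}_a$ along its diagonal-type line into the two disjoint Lipschitz triangles $\mathcal{T}^{(a)}_1$ (vertices $(0,0),(a,0),(0,1)$) and its complement triangle. The minimax characterization of $\lambda_2(p;\mathcal{R}_a)$ via two disjoint Lipschitz subdomains (\cite[Remark~4]{bobkovparini}), already used in the proof of Lemma~\ref{lem:lambda-derivative}, gives
$$
\lambda_2(p;\mathcal{R}_a)\le \max\{\lambda_1(p;\mathcal{T}^{(a)}_1),\lambda_1(p;\mathcal{T}^{(a)}_2)\}.
$$
To compare these triangles with $\mathcal{T}_1$, use domain monotonicity. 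For $a\ge1$, $\mathcal{T}^{(a)}_1\supset \mathcal{T}_1$ up to translation, so it is even easier. In general, set $m=\min\{a,1\}$ and $M=\max\{a,1\}$. Then each triangle contains a copy of the isosceles right triangle with legs $m$, while the square $\mathcal{R}_1$ is contained in $\mathcal{R}_a^{M}$-type boxes. Scaling gives
$$
\lambda_1(p;\mathcal{T}^{(a)}_i)\le m^{-p}\lambda_1(p;\mathcal{T}_1)\le \max\{1,a^{-p}\}\,\lambda_{\boxbslashl}(p;\mathcal{R}_1).
$$
Hence
$$
\lambda_2(p;\mathcal{R}_a)\le \max\{1,a^{-p}\}\,\lambda_{\boxbslashl}(p;\mathcal{R}_1).
$$

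\emph{Step 2: a lower bound for $\lambda_{\boxbarl}$ by continuity.} By Corollary~\ref{cor:differentiability}, the function $a\mapsto\lambda_{\boxbarl}(p;\mathcal{R}_a)=\lambda_1(p;\mathcal{R}_{a/2})$ is continuous. Set
$$
\varepsilon:=\lambda_{\boxbarl}(p;\mathcal{R}_1)-\lambda_{\boxbslashl}(p;\mathcal{R}_1)>0.
$$
Choose $\sigma\in(0,1)$ so small that, for all $a\in[1-\sigma,1+\sigma]$, both of the following hold:
$$
\lambda_{\boxbarl}(p;\mathcal{R}_a)>\lambda_{\boxbarl}(p;\mathcal{R}_1)-\varepsilon/3
$$
and
$$
\bigl((1-\sigma)^{-p}-1\bigr)\lambda_{\boxbslashl}(p;\mathcal{R}_1)<\varepsilon/3.
$$
Combining these with Step~1 yields
$$
\lambda_2(p;\mathcal{R}_a)<\lambda_{\boxbslashl}(p;\mathcal{R}_1)+\varepsilon/3<\lambda_{\boxbarl}(p;\mathcal{R}_a),
$$
which is \eqref{eq:lem:rectangle}.

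\emph{Alternative and main obstacle.} Instead of Step~1, one could use Lemma~\ref{lem:continuity} directly. The map $a\mapsto\lambda_2(p;\mathcal{R}_a)$ is continuous, and Lemma~\ref{lem:lambda-derivative} gives $\lambda_2(p;\mathcal{R}_1)\le\lambda_{\boxbslashl}(p;\mathcal{R}_1)<\lambda_{\boxbarl}(p;\mathcal{R}_1)$, so the strict gap persists for $a$ near $1$ by joint continuity of both sides. This route is simpler, and I would take it. The only step needing real care is the strict inequality at $a=1$, which rests entirely on the numerically computed derivatives \eqref{eq:numvalues} in Lemma~\ref{lem:lambda-derivative}. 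Everything afterwards is routine continuity bookkeeping, and $\sigma$ is allowed to depend on $p$, as the statement permits.
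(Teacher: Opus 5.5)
Your proof is correct, and the route you say you would actually take (the ``alternative'') is the paper's argument. Fix $p>2$ so that Lemma~\ref{lem:lambda-derivative} gives $\lambda_2(p;\mathcal{R}_1)\le\lambda_{\boxbslashl}(p;\mathcal{R}_1)<\lambda_{\boxbarl}(p;\mathcal{R}_1)$. Then carry the strict inequality to $a$ near $1$ using continuity in $a$ of $\lambda_2(p;\mathcal{R}_a)$ (Lemma~\ref{lem:continuity}) and of $\lambda_{\boxbarl}(p;\mathcal{R}_a)$ (Corollary~\ref{cor:differentiability}). Only continuity in $a$ at fixed $p$ is needed, not joint continuity.

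Your Steps~1--2 are a valid variant. They make explicit that one-sided control suffices: an upper bound on $\lambda_2$ and a lower bound on $\lambda_{\boxbarl}$ near $a=1$. They also give an explicit $\sigma$ in terms of the gap $\varepsilon(p)$.

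The triangle splitting in Step~1 is a detour, though. Use the inclusions $\mathcal{R}_1\subset\mathcal{R}_a$ for $a\ge1$ and $a\mathcal{R}_1\subset\mathcal{R}_a$ for $a<1$, together with domain monotonicity and scaling of LS eigenvalues. These give directly $\lambda_2(p;\mathcal{R}_a)\le\max\{1,a^{-p}\}\,\lambda_2(p;\mathcal{R}_1)$. This is at least as sharp as your bound, since $\lambda_2(p;\mathcal{R}_1)\le\lambda_{\boxbslashl}(p;\mathcal{R}_1)$, and it is exactly the estimate \eqref{eq:contni1} behind Lemma~\ref{lem:continuity}. Similarly, $\mathcal{R}_{a/2}\subset(1+\sigma)\mathcal{R}_{1/2}$ for $a\le1+\sigma$ gives $\lambda_{\boxbarl}(p;\mathcal{R}_a)\ge(1+\sigma)^{-p}\lambda_{\boxbarl}(p;\mathcal{R}_1)$. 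With these two bounds the whole argument reduces to monotonicity and scaling plus the strict inequality at $a=1$.

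Your triangle inclusions $\min\{a,1\}\,\mathcal{T}_1\subset\mathcal{T}^{(a)}_1$ are correct. The aside about $\mathcal{R}_1$ lying in $\mathcal{R}_a^M$-type boxes plays no role.

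Neither route gives uniformity in $p$. Since $\varepsilon(p)\to0$ as $p\to2$, your $\sigma$ shrinks to $0$. This is the non-uniformity the paper flags in the remark following the lemma.
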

\begin{remark}
We are not able to prove \eqref{eq:lem:rectangle} for all sufficiently small $p>2$ and $a>1$ simultaneously, and the numerical results of \cite[Section~4.3]{Horak} indicate that such uniformity might not be possible, see Remark~\ref{rem:horak} below. 
This point forces us to establish some further estimates. 
\end{remark}

By the combination of Lemmas~\ref{lem:multiplicity} and \ref{lem:convergence}, for any $a>1$ there exists $\gamma_a>0$ such that  
$\lambda_{\boxbarl}(p;\mathcal{R}_a) 
< 
\lambda_{\boxminus}(p;\mathcal{R}_a)$ for any $p \in [2-\gamma_a,2+\gamma_a]$. 
But such a statement is not sufficient for our purposes. 
Our aim is to compare $\lambda_{\boxbarl}(p;\mathcal{R}_a)$ and $\lambda_{\boxminus}(p;\mathcal{R}_a)$ uniformly with respect to $a$ and $p$. 

\begin{lemma}\label{lem:comparison}
There exists $\gamma > 0$ such that for any $a \in (1,1+\gamma]$ and $p \in [2-\gamma,2+\gamma]$ we have 
\begin{equation}\label{eq:lem:comparison}
\lambda_{\boxbarl}(p;\mathcal{R}_a) 
< 
\lambda_{\boxminus}(p;\mathcal{R}_a).
\end{equation}
\end{lemma}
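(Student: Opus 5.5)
We rewrite the comparison through the first eigenvalue. By \eqref{eq:lambda=lambda} and \eqref{eq:lambda=lambda2},
$$
\lambda_{\boxbarl}(p;\mathcal{R}_a)=\lambda_1(p;\mathcal{R}_{a/2}),
\qquad
\lambda_{\boxminus}(p;\mathcal{R}_a)=2^p\lambda_1(p;\mathcal{R}_{2a}).
$$
So we study $F(p,a):=2^p\lambda_1(p;\mathcal{R}_{2a})-\lambda_1(p;\mathcal{R}_{a/2})$. We must show $F>0$ on $(1,1+\gamma]\times[2-\gamma,2+\gamma]$. By Lemma~\ref{lem:differentiability:l1}, $F$ is $C^1$ near $(2,1)$. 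At the square, $F(p,1)=0$ for every $p>1$, since $\lambda_{\boxbarl}(p;\mathcal{R}_1)=\lambda_{\boxminus}(p;\mathcal{R}_1)$ (see the proof of Lemma~\ref{lem:lambda-derivative}). So a direct continuity argument fails, and we use the $a$-derivative instead.

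By the mean value theorem, $F(p,a)=F(p,1)+(a-1)\,\partial_aF(p,\xi)=(a-1)\,\partial_aF(p,\xi)$ for some $\xi\in(1,a)$. It therefore suffices to find $\gamma>0$ with $\partial_aF>0$ on $[1,1+\gamma]\times[2-\gamma,2+\gamma]$. Since $\partial_aF$ is continuous by Lemma~\ref{lem:differentiability:l1}, it is enough to check $\partial_aF(2,1)>0$.

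The chain rule and \eqref{eq:lem:differentiability:l12} give
$$
\partial_aF(p,a)=2^{p+1}\,\partial_b\lambda_1(p;\mathcal{R}_b)\big|_{b=2a}-\tfrac12\,\partial_b\lambda_1(p;\mathcal{R}_b)\big|_{b=a/2}.
$$
The derivative of $\lambda_1$ in $b$ is negative, so the sign is not obvious. Instead, compute explicitly at $p=2$ using $\lambda_1(2;\mathcal{R}_b)=\pi^2(b^{-2}+1)$:
$$
\partial_aF(2,a)=8\cdot\pi^2\cdot(-2)(2a)^{-3}-\tfrac12\,\pi^2(-2)(a/2)^{-3}.
$$
At $a=1$ this equals $-2\pi^2+8\pi^2=6\pi^2>0$.

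As a cross-check, at $p=2$ we have $F(2,a)=4\pi^2\bigl((4a^2)^{-1}+1\bigr)-\pi^2(4a^{-2}+1)=3\pi^2(1-a^{-2})$, which is positive for $a>1$ and has derivative $6\pi^2$ at $a=1$.

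Continuity of $\partial_aF$ then yields $\gamma>0$ with $\partial_aF\ge 3\pi^2$ on $[1,1+\gamma]\times[2-\gamma,2+\gamma]$. Consequently $F(p,a)\ge 3\pi^2(a-1)>0$ for $a\in(1,1+\gamma]$, which proves \eqref{eq:lem:comparison}.

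The only delicate point is the joint continuity of $\partial_a\lambda_1(p;\mathcal{R}_b)$ in $(p,b)$, which we use in place of the fixed-$a$ continuity. This is exactly what Lemma~\ref{lem:differentiability:l1} provides, via Lemma~\ref{lem:compactness}. Everything else is elementary.
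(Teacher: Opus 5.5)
Your proof is correct and is essentially the paper's argument: both use that $\lambda_{\boxminus}(p;\mathcal{R}_a)-\lambda_{\boxbarl}(p;\mathcal{R}_a)$ vanishes identically at $a=1$, compute its $a$-derivative at $p=2$ as $6\pi^2a^{-3}>0$, and conclude via the joint $C^1$ regularity from Corollary~\ref{cor:differentiability}. The only differences are cosmetic: you use the mean value theorem where the paper writes $\int_1^a\partial_a f(p,t)\,dt$, and your bound $F(p,a)\ge 3\pi^2(a-1)$ is a harmless quantitative refinement.
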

\begin{proof}
Denote
$$
f(p,a) = 
\lambda_{\boxminus}(p;\mathcal{R}_a)
-
\lambda_{\boxbarl}(p;\mathcal{R}_a). 
$$
We know from Corollary~\ref{cor:differentiability} that $f \in C^1((1,+\infty) \times (0,+\infty))$. 
It is evident from \eqref{eq:lambda=lambda} and \eqref{eq:lambda=lambda2} that $f(p,1)=0$ for any $p>1$. 
Let us calculate $\partial_a f(2,a)$.  
Using \eqref{eq:lambda=lambda} and \eqref{eq:lambda=lambda2}, and the explicit values of the Laplace eigenvalues in rectangles, we have
$$
\lambda_{\boxbarl}(2;\mathcal{R}_a) = 
\pi^2 \left(4 a^{-2}+1\right)
\quad \text{and} \quad 
\lambda_{\boxminus}(2;\mathcal{R}_a) = 
\pi^2 \left(a^{-2}+4\right).
$$
Therefore, 
\begin{equation}\label{eq:partiala}
\partial_a \lambda_{\boxbarl}(2;\mathcal{R}_a) 
= 
-8 \pi^2 a^{-3}
\quad \text{and} \quad
\partial_a \lambda_{\boxminus}(2;\mathcal{R}_a) 
= 
-2 \pi^2 a^{-3},
\end{equation}
and hence $\partial_a f(2,a) = 6 \pi^2 a^{-3}$. 
(One can also obtain the expressions \eqref{eq:partiala} from \eqref{eq:lem:differentiability:l12}.)
Thanks to the regularity of $f$, for any $p$ sufficiently close to $2$ and any $a>1$ sufficiently close to $1$ we have
$$
f(p,a)
=
f(p,a) - f(p,1) = \int_1^a \partial_a f(p,t) \,dt > 0,
$$
which completes the proof. 
\end{proof}

\begin{remark}
Lemma~\ref{lem:comparison} would be a consequence of the monotonicity (or convexity) of the first eigenvalue of the $p$-Laplacian over the class of equimeasurable rectangles. 
However, in the case $p \neq 2$, we were not able to find a corresponding result in the literature. 
\end{remark}

\subsection{Uniform spectral gap}

\begin{lemma}\label{lem:gap}
For any constant $A>0$ such that the interval 
$$
I 
:= 
[\lambda_3(2;\mathcal{R}_1)+A, \lambda_4(2;\mathcal{R}_1)-A]
$$
is nonempty, there exists $\delta>0$ such that for any $a \in [1-\delta,1+\delta]$ and $p \in [2-\delta,2+\delta]$ there are no eigenvalues of the $p$-Laplacian in $\mathcal{R}_a$ belonging to $I$, and we also have
\begin{equation}\label{eq:lem:gap}
\lambda_3(p;\mathcal{R}_a)
<
\lambda_3(2;\mathcal{R}_1) + A
\leq 
\lambda_4(2;\mathcal{R}_1) - A
<
\lambda_4(p;\mathcal{R}_a).
\end{equation}
\end{lemma}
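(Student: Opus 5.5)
The plan is to argue by contradiction using the compactness result, Lemma~\ref{lem:compactness}, together with the fact that for $p=2$ the whole spectrum of $\mathcal{R}_1$ is $\{\lambda_k(2;\mathcal{R}_1)\}$. The inequality \eqref{eq:lem:gap} involving LS eigenvalues will come directly from the continuity result, Lemma~\ref{lem:continuity}.

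\textbf{Step 1: no eigenvalues in $I$.} Suppose the claim fails. Then there are sequences $a_n \to 1$, $p_n \to 2$, and eigenvalues $\lambda_n \in I$ of the $p_n$-Laplacian in $\mathcal{R}_{a_n}$. Since $I$ is compact, we may pass to a subsequence with $\lambda_n \to \lambda \in I$, and $\lambda>0$.

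Normalize the eigenfunctions as $\|u_n\|_{p_n;\mathcal{R}_{a_n}}=1$. Lemma~\ref{lem:compactness} then gives an eigenfunction $u$ of the Laplacian in $\mathcal{R}_1$, with $\|u\|_2=1$ and eigenvalue $\lambda$. Every Laplace eigenvalue of $\mathcal{R}_1$ is some $\lambda_k(2;\mathcal{R}_1)$, and by the Courant--Fisher description there is none strictly between $\lambda_3(2;\mathcal{R}_1)$ and $\lambda_4(2;\mathcal{R}_1)$. Since $A>0$ puts $I$ strictly inside that open interval, $\lambda \in I$ is impossible. This contradiction gives some $\delta_1>0$ for which no eigenvalue lies in $I$.

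The step that needs the most care is checking the hypotheses of Lemma~\ref{lem:compactness}. It requires the limit $\lambda>0$, which holds, and it needs the normalization to pass to the limit so that $u\not\equiv 0$. The lemma provides $\|u\|_p=1$ through the $C^1$ convergence of the antisymmetric extensions on a fixed rectangle. Some care is needed because the domains $\mathcal{R}_{a_n}$ vary; this is already built into Lemma~\ref{lem:regularity} and Lemma~\ref{lem:compactness}.

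\textbf{Step 2: inequality \eqref{eq:lem:gap}.} By Lemma~\ref{lem:continuity}, the maps $(p,a)\mapsto\lambda_3(p;\mathcal{R}_a)$ and $(p,a)\mapsto\lambda_4(p;\mathcal{R}_a)$ are continuous at $(2,1)$. Since $A>0$, there is $\delta_2>0$ such that
\[
\lambda_3(p;\mathcal{R}_a)<\lambda_3(2;\mathcal{R}_1)+A
\quad\text{and}\quad
\lambda_4(p;\mathcal{R}_a)>\lambda_4(2;\mathcal{R}_1)-A
\]
whenever $|p-2|\le\delta_2$ and $|a-1|\le\delta_2$. The middle inequality in \eqref{eq:lem:gap} is just the assumption that $I$ is nonempty.

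Taking $\delta=\min\{\delta_1,\delta_2\}$ completes the proof.
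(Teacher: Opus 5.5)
Your proposal is correct and follows the paper's proof essentially step for step. You argue by contradiction via Lemma~\ref{lem:compactness} to produce a Laplace eigenfunction in $\mathcal{R}_1$ whose eigenvalue lies strictly between $\lambda_3(2;\mathcal{R}_1)$ and $\lambda_4(2;\mathcal{R}_1)$, which is impossible, and you then shrink $\delta$ using the joint continuity from Lemma~\ref{lem:continuity} to obtain \eqref{eq:lem:gap}.
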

\begin{proof}
Suppose, by contradiction, that there is some $A>0$ and sequences $a_n \to 1$ and $p_n \to 2$ such that for each $n$ there exists an eigenvalue $\lambda(p_n;\mathcal{R}_{a_n}) \in I$. 
Let $u_n \in W_0^{1,p_n}(\mathcal{R}_{a_n})$ be an eigenfunction associated with $\lambda(p_n;\mathcal{R}_{a_n})$ and normalized as $\|u_n\|_{p_n;\mathcal{R}_{a_n}} = 1$. 
Let us also pass to a subsequence along which $\{\lambda(p_n;\mathcal{R}_{a_n})\}$ converges, up to a subsequence, to some $\lambda \in {I}$. 
Thanks to Lemma~\ref{lem:compactness}, 
$\{u_n\}$ converges to an eigenfunction of the Laplacian in $\mathcal{R}_1$ whose eigenvalue $\lambda$ lies strictly between $\lambda_3(2;\mathcal{R}_1)$ and $\lambda_4(2;\mathcal{R}_1)$, which is impossible. 

Making $\delta$ smaller if necessary, and recalling from Lemma~\ref{lem:continuity} that the mappings $(p,a) \mapsto \lambda_3(p;\mathcal{R}_a)$ and $(p,a) \mapsto \lambda_4(p;\mathcal{R}_a)$ are continuous, we obtain \eqref{eq:lem:gap}. 
\end{proof}

\section{Proof of Theorem~\ref{thm:main}}\label{sec:proof}

The arguments are split in three consecutive steps, and we refer to Figure~\ref{fig1} for clarity. 

\textbf{Step I.}
Let $A>0$ be such that the interval
$$
I 
:= 
[\lambda_3(2;\mathcal{R}_1)+A, \lambda_4(2;\mathcal{R}_1)-A]
$$
is nonempty, and let $\delta>0$ be given by Lemma~\ref{lem:gap}. 
Let us fix a sufficiently small $p_0 \in (2,2+\min\{\delta,\gamma\}]$ and any $a \in (1,1+\min\{\delta,\sigma,\gamma,\sqrt{8/3}\}]$, where
$\sigma>0$ is given by Lemma~\ref{lem:rectangle}, so that 
\begin{equation}\label{eq:proof:1}
\lambda_2(p_0;\mathcal{R}_a)
<
\lambda_{\boxbarl}(p_0;\mathcal{R}_a),
\end{equation}
and $\gamma>0$ is given by Lemma~\ref{lem:comparison}, so that
\begin{equation}\label{eq:proof:-1}
\lambda_{\boxbarl}(p;\mathcal{R}_a)
<
\lambda_{\boxminus}(p;\mathcal{R}_a)
\quad \text{for any}~ p \in [2,p_0].
\end{equation}
Moreover, since $a < \sqrt{8/3}$,  Lemma~\ref{lem:multiplicity} also holds. 
For later reference, we recall the following particular case of the inequality \eqref{eq:lem:gap} from Lemma~\ref{lem:gap}:
\begin{equation}\label{eq:proof:0}
\lambda_3(p;\mathcal{R}_a)
<
\lambda_3(2;\mathcal{R}_1) + A
\leq 
\lambda_4(2;\mathcal{R}_1) - A
<
\lambda_4(2;\mathcal{R}_a)
\quad \text{for any}~ p \in [2,p_0].
\end{equation}

\begin{figure}[!ht]
\centering
\includegraphics[width=0.9\linewidth]{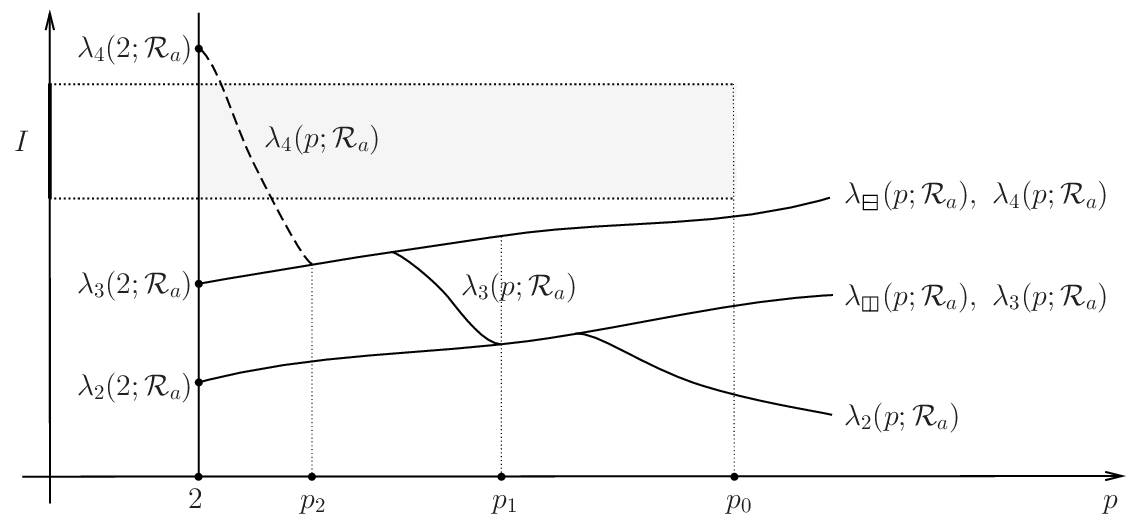}
\caption{Schematic behavior of branches of eigenvalues in the arguments.}
\label{fig1}
\end{figure}

\textbf{Step II.}
If $\lambda_{\boxbarl}(p;\mathcal{R}_a)$ is a non-LS eigenvalue for some $p \in (2,p_0]$, then we are done. 
Suppose that $\lambda_{\boxbarl}(p;\mathcal{R}_a)$ is an LS eigenvalue for any $p \in (2,p_0]$. That is, by definition, for any $p \in (2,p_0]$ there exists an integer $k(p) \geq 2$ such that 
\begin{equation}\label{eq:proof:3}
\lambda_{\boxbarl}(p;\mathcal{R}_a)
=
\lambda_{k(p)}(p;\mathcal{R}_a).
\end{equation}
Moreover, we can assume that $k(p)$ is the maximal integer for which \eqref{eq:proof:3} holds. 

We observe from \eqref{eq:proof:1} that $k(p_0) \geq 3$. 
If there exists a decreasing sequence $p_n^* \to 2$ such that $k(p_n^*) \geq 3$ for any $n$, then, by the continuity of eigenvalues with respect to $p$ (see Lemmas~\ref{lem:continuity} and \ref{lem:convergence}), we have
$$
\lambda_2(2;\mathcal{R}_{a}) = 
\lim_{n \to +\infty} \lambda_{\boxbarl}(p_n^*;\mathcal{R}_{a})
\geq
\lim_{n \to +\infty} \lambda_{3}(p_n^*;\mathcal{R}_{a})
=
\lambda_{3}(2;\mathcal{R}_{a})
> 
\lambda_{2}(2;\mathcal{R}_{a}),
$$
where the last strict inequality is due to Lemma~\ref{lem:multiplicity}.
A contradiction. 
Consequently, there exists $p_1 \in (2,p_0)$ such that 
\begin{equation}\label{eq:proof:2}
\lambda_{\boxbarl}(p;\mathcal{R}_a)
=
\lambda_{2}(p;\mathcal{R}_a)
<
\lambda_{3}(p;\mathcal{R}_a)
\quad \text{for any}~ p \in (2,p_1),
\end{equation}
and we can assume that $p_1$ is the maximal value for which \eqref{eq:proof:2} holds. 
Let us justify that 
\begin{equation}\label{eq:proof:4}
\lambda_{\boxbarl}(p_1;\mathcal{R}_a)
=
\lambda_{2}(p_1;\mathcal{R}_a)
=
\lambda_{3}(p_1;\mathcal{R}_a).
\end{equation}
The first equality in \eqref{eq:proof:4} is just a consequence of the continuity. 
Suppose, by contradiction to the second equality, that $\lambda_{\boxbarl}(p_1;\mathcal{R}_a)<
\lambda_{3}(p_1;\mathcal{R}_a)$. 
Then, by the maximality of $p_1$ and the continuity reason, there exists a decreasing sequence $p_n^* \to p_1$ such that 
\begin{equation}\label{eq:proof:40}
\lambda_{2}(p_n^*;\mathcal{R}_a) 
\neq 
\lambda_{\boxbarl}(p_n^*;\mathcal{R}_a)
<
\lambda_{3}(p_n^*;\mathcal{R}_a)
\quad \text{for any}~ n.
\end{equation}
However, \eqref{eq:proof:40} contradicts our assumptions that $\lambda_{\boxbarl}(p;\mathcal{R}_a)$ is a higher LS eigenvalue for any $p \in (2,p_0]$. 
This proves \eqref{eq:proof:4}.

\medskip
\textbf{Step III.}
Now we perform the same analysis as above, but with the branches $p \to \lambda_{3}(p;\mathcal{R}_a)$ and $p \to \lambda_{\boxminus}(p;\mathcal{R}_a)$.
We derive from \eqref{eq:proof:4} and Lemma~\ref{lem:comparison} that
\begin{equation}\label{eq:proof:6}
\lambda_{3}(p_1;\mathcal{R}_a)
<
\lambda_{\boxminus}(p_1;\mathcal{R}_a).
\end{equation}
If $\lambda_{\boxminus}(p;\mathcal{R}_a)$ is a non-LS eigenvalue for some $p \in (2,p_1]$, then we are done.
Suppose that $\lambda_{\boxminus}(p;\mathcal{R}_a)$ is an LS eigenvalue for any $p \in (2,p_1]$. As above, by definition, for any $p \in (2,p_1]$ there exists an integer $m(p) \geq 2$ such that 
\begin{equation}\label{eq:proof:5}
\lambda_{\boxminus}(p;\mathcal{R}_a)
=
\lambda_{m(p)}(p;\mathcal{R}_a).
\end{equation}
Moreover, we let $m(p)$ be the maximal integer for which \eqref{eq:proof:5} holds. 

We deduce from \eqref{eq:proof:-1} and \eqref{eq:proof:2} that $m(p) \geq 3$ for any $p \in (2,p_1]$. 
Furthermore, by \eqref{eq:proof:6}, $m(p_1) \geq 4$. 
If there exists a decreasing sequence $p_n^* \to 2$ such that $m(p_n^*) \geq 4$ for any $n$, then, by the continuity of eigenvalues with respect to $p$ (see Lemmas~\ref{lem:continuity} and \ref{lem:convergence}), we have
$$
\lambda_3(2;\mathcal{R}_{a}) = 
\lim_{n \to +\infty} \lambda_{\boxminus}(p_n^*;\mathcal{R}_{a})
\geq
\lim_{n \to +\infty} \lambda_{4}(p_n^*;\mathcal{R}_{a})
=
\lambda_{4}(2;\mathcal{R}_{a})
> 
\lambda_{3}(2;\mathcal{R}_{a}),
$$
where the last strict inequality is due to Lemma~\ref{lem:multiplicity}. 
A contradiction. 
Consequently, there exists $p_2 \in (2,p_1)$ such that 
\begin{equation}\label{eq:proof:7}
\lambda_{\boxminus}(p;\mathcal{R}_a)
=
\lambda_{3}(p;\mathcal{R}_a)
<
\lambda_{4}(p;\mathcal{R}_a)
\quad \text{for any}~ p \in (2,p_2),
\end{equation}
and we can assume that $p_2$ is the maximal value for which \eqref{eq:proof:7} holds. 
As above, let us justify that 
\begin{equation}\label{eq:proof:4x}
\lambda_{\boxminus}(p_2;\mathcal{R}_a)
=
\lambda_{3}(p_2;\mathcal{R}_a)
=
\lambda_{4}(p_2;\mathcal{R}_a).
\end{equation}
The first equality in \eqref{eq:proof:4x} is clear. 
Suppose, by contradiction, that $\lambda_{\boxminus}(p_2;\mathcal{R}_a)<
\lambda_{4}(p_2;\mathcal{R}_a)$. 
Then, by the maximality of $p_2$, a continuity reason, and \eqref{eq:proof:2}, there exists a decreasing sequence $p_n^* \to p_2$ such that 
\begin{equation}\label{eq:proof:40x}
\lambda_{2}(p_n^*;\mathcal{R}_a) 
<
\lambda_{3}(p_n^*;\mathcal{R}_a) 
\neq 
\lambda_{\boxminus}(p_n^*;\mathcal{R}_a)
<
\lambda_{4}(p_n^*;\mathcal{R}_a)
\quad \text{for any}~ n,
\end{equation}
which again contradicts our assumptions that $\lambda_{\boxminus}(p;\mathcal{R}_a)$ is a higher LS eigenvalue for any $p \in (2,p_1]$. 
This proves \eqref{eq:proof:4x}.

Since the mapping $p \to \lambda_{4}(p;\mathcal{R}_a)$ is continuous and \eqref{eq:proof:4x} holds, we recall \eqref{eq:proof:0} and deduce the existence of $p_3 \in (2,p_2)$ such that $\lambda_{4}(p_3;\mathcal{R}_a) \in I$. 
But this contradicts the nonexistence given by Lemma~\ref{lem:gap}.

Thus, we have proved that either $\lambda_{\boxbarl}(p;\mathcal{R}_a)$ is a non-LS eigenvalue for some $p \in (2,p_0]$, or $\lambda_{\boxminus}(p;\mathcal{R}_a)$ is a non-LS eigenvalue for some $p \in (2,p_1]$. 
The proof of Theorem~\ref{thm:main} is complete.
\qed

\bigskip
The result of Theorem~\ref{thm:main} can be slightly strengthen by means of the following assertion. 
\begin{proposition}
Let $\lambda_{\boxbarl}(p;\mathcal{R}_{a})$ (resp., $\lambda_{\boxminus}(p;\mathcal{R}_{a})$) be a non-LS eigenvalue for some $p>1$ and $a>0$. 
Then there exists $\varepsilon>0$ such that $\lambda_{\boxbarl}(q;\mathcal{R}_b)$ (resp., $\lambda_{\boxminus}(q;\mathcal{R}_b)$) is a non-LS eigenvalue for any $q \in (p-\varepsilon, p+\varepsilon)$ and $b \in (a-\varepsilon, a+\varepsilon)$.
\end{proposition}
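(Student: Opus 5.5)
We argue by contradiction and reduce everything to the continuity results of Section~\ref{sec:preliminaries}. Fix $p>1$, $a>0$ with $\lambda_{\boxbarl}(p;\mathcal{R}_a)\notin\{\lambda_k(p;\mathcal{R}_a)\}_{k}$; the case of $\lambda_{\boxminus}$ is identical. Suppose the claim fails. Then there are sequences $q_n\to p$, $b_n\to a$ and integers $k_n$ with
$$
\lambda_{\boxbarl}(q_n;\mathcal{R}_{b_n})=\lambda_{k_n}(q_n;\mathcal{R}_{b_n}) \quad \text{for all } n .
$$

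\textbf{Step 1: the indices $k_n$ stay bounded.} By Corollary~\ref{cor:differentiability}, $\lambda_{\boxbarl}(q_n;\mathcal{R}_{b_n})\to\lambda_{\boxbarl}(p;\mathcal{R}_a)$, so these values are bounded by some $M$. Choose $K$ with $\lambda_K(p;\mathcal{R}_a)>M+1$, which is possible by \eqref{eq:sequenceoflambdas}. By Lemma~\ref{lem:continuity}, $\lambda_K(q_n;\mathcal{R}_{b_n})>M$ for all large $n$. Since the LS sequence is nondecreasing in $k$, this forces $k_n<K$ for large $n$.

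\textbf{Step 2: pass to the limit.} Since $k_n$ takes only finitely many values, we may pass to a subsequence on which $k_n\equiv k$ is constant. Letting $n\to\infty$ and using the continuity of both sides (Lemma~\ref{lem:continuity} and Corollary~\ref{cor:differentiability}) gives
$$
\lambda_{\boxbarl}(p;\mathcal{R}_a)=\lambda_k(p;\mathcal{R}_a).
$$
This contradicts the assumption that $\lambda_{\boxbarl}(p;\mathcal{R}_a)$ is non-LS, so a suitable $\varepsilon>0$ exists.

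\textbf{Where the difficulty lies.} The main point is Step~1, the uniform boundedness of the indices. It rests on the joint continuity in $(p,a)$ of a single fixed $\lambda_K$, supplied by Lemma~\ref{lem:continuity}, combined with the divergence $\lambda_K(p;\mathcal{R}_a)\to+\infty$. Once the indices are bounded, the rest is immediate.

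The same argument works for any continuous branch of eigenvalues, since it uses nothing specific to $\lambda_{\boxbarl}$ or $\lambda_{\boxminus}$ beyond continuity. In particular, the non-LS property is an open condition in $(p,a)$. Combined with Theorem~\ref{thm:main}, this produces a whole open set of parameters $(p,a)$ at which non-LS eigenvalues exist.
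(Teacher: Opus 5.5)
Your proof is correct and takes essentially the same approach as the paper. The paper also argues by contradiction with sequences $q_n\to p$, $b_n\to a$, bounds the indices using \eqref{eq:sequenceoflambdas} and the joint continuity of Lemma~\ref{lem:continuity}, fixes the index along a subsequence, and passes to the limit using Lemma~\ref{lem:continuity} and Corollary~\ref{cor:differentiability}; your Step~1 simply spells out the boundedness of the indices, which the paper states in one line.
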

\begin{proof}
Suppose, by contradiction, that there are sequences $p_n \to p$ and $a_n \to a$ such that each $\lambda_{\boxbarl}(p_n;\mathcal{R}_{a_n})$ is an LS eigenvalue, but $\lambda_{\boxbarl}(p;\mathcal{R}_{a})$ is not. 
That is, for any $n \in \mathbb{N}$ there exists $k(n) \in \mathbb{N}$ such that 
$$
\lambda_{\boxbarl}(p_n;\mathcal{R}_{a_n})
=
\lambda_{k(n)}(p_n;\mathcal{R}_{a_n}).
$$
Since $\{k(n)\}$ is bounded in view of \eqref{eq:sequenceoflambdas} and the continuity of LS eigenvalues given by Lemma~\ref{lem:continuity}, we can extract a subsequence along which $\{k(n)\}$ converges to some $k \in \mathbb{N}$. 
But then Lemma~\ref{lem:continuity} and Corollary~\ref{cor:differentiability} yield $\lambda_{\boxbarl}(p;\mathcal{R}_{a})=\lambda_{k}(p;\mathcal{R}_{a})$, which contradicts our assumption. The same arguments apply to $\lambda_{\boxminus}(p;\mathcal{R}_{a})$.
\end{proof}

We finish this section with a few general remarks. 

\begin{remark}\label{rem:variational}
Although $\lambda_{\boxbarl}(p;\mathcal{R}_{a})$ (resp., $\lambda_{\boxminus}(p;\mathcal{R}_{a})$) might be a non-LS eigenvalue for some $p>1$ and $a>0$, it is still \textit{variational} in the sense that it can be characterized as the global minimum of the Rayleigh quotient over the subspace of $W_0^{1,p}(\mathcal{R}_{a})$ consisting of functions antisymmetric with respect to the vertical (resp., horizontal) middle line of $\mathcal{R}_{a}$. 
We refer to \cite{drabek-var,drabek-takac} for a related discussion. 
\end{remark}

\begin{remark}
When $\lambda_{\boxbarl}(p;\mathcal{R}_{a})$ (resp., $\lambda_{\boxminus}(p;\mathcal{R}_{a})$) is a non-LS eigenvalue for some $p>1$ and $a>0$, it is interesting to know whether this eigenvalue is stable with respect to domain perturbations. Namely, for any $\varepsilon>0$ and any sufficiently small (in an appropriate sense) perturbation $\Omega$ of $\mathcal{R}_{a}$ destroying symmetry, is there an eigenvalue of the $p$-Laplacian in $\Omega$ lying in the $\varepsilon$-neighborhood of the level $\lambda_{\boxbarl}(p;\mathcal{R}_{a})$ (resp., $\lambda_{\boxminus}(p;\mathcal{R}_{a})$)?
\end{remark}

\begin{remark}\label{rem:horak}
In \cite[Sections~4.2 and 4.3]{Horak}, \textsc{Hor\'ak} computed several lower energy eigenvalues of the $p$-Laplacian in the square and rectangles and formulated the following conjecture related to our analysis: 
For any $a>1$, there exists $p^* = p^*(a)>2$ such that 
\begin{align}
\lambda_2(p;\mathcal{R}_a)
&=
\lambda_{\boxbarl}(p;\mathcal{R}_a)
\quad \text{for any}~ p \in [2,p^*],\\
\lambda_2(p;\mathcal{R}_a)
&<
\lambda_{\boxbarl}(p;\mathcal{R}_a)
\quad \text{for any}~ p > p^*.
\end{align}
Adding to this, we anticipate $p^* = +\infty$ for any $a \geq 2$.
\end{remark}

\section{A bit more about the square \texorpdfstring{$\mathcal{R}_1$}{}}\label{sec:abitmoreaboutsquare}

For the $p$-Laplacian in the square $\mathcal{R}_1$, it was conjectured by \textsc{Kawohl} \cite{kawohl} with the numerical support of \cite{Horak} and 
the analysis of the limiting cases $p \to 1$ and $p \to +\infty$ that
\begin{align}\label{eq:square1}
&\lambda_2(p;\mathcal{R}_1)
=
\lambda_{\boxbarl}(p;\mathcal{R}_1)
<
\lambda_{\boxbslashl}(p;\mathcal{R}_1)
\quad \text{for any}~ p \in (1,2),\\
\label{eq:square2}
&\lambda_2(p;\mathcal{R}_1)
=
\lambda_{\boxbslashl}(p;\mathcal{R}_1)
<
\lambda_{\boxbarl}(p;\mathcal{R}_1)
\quad \text{for any}~ p > 2,
\end{align} 
cf.\ \eqref{eq:lrrl1} and Remark~\ref{rem:horak}. 
Lemma~\ref{lem:lambda-derivative} further supports this conjecture for $p$ sufficiently close to~$2$. 

It might be tempting to anticipate that, in addition to \eqref{eq:square1} and \eqref{eq:square2}, the following equalities hold:
\begin{align}\label{eq:square3}
&\lambda_{\boxbslashl}(p;\mathcal{R}_1)
=
\lambda_3(p;\mathcal{R}_1)
\quad \text{for any}~ p \in (1,2),\\
\label{eq:square4}
&\lambda_{\boxbarl}(p;\mathcal{R}_1)
=
\lambda_3(p;\mathcal{R}_1)
\quad \text{for any}~ p > 2. 
\end{align} 
The situation, however, is more intriguing, as \eqref{eq:square4} cannot be true for $p$ large enough.
\begin{proposition}
There exists $\hat{p} \geq 2$ such that 
\begin{equation}\label{eq:square5}
\lambda_3(p;\mathcal{R}_1)
<
\lambda_{\boxbarl}(p;\mathcal{R}_1)
\quad \text{for any}~ p > \hat{p}. 
\end{equation}
\end{proposition}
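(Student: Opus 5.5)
The approach is to pass to the limit $p\to+\infty$ and compare the limits of the suitably normalized eigenvalues. It is known (Juutinen--Lindqvist--Manfredi and subsequent works on the $\infty$-eigenvalue problem) that for LS eigenvalues $\lambda_k(p;\Omega)^{1/p}\to\Lambda_k(\infty;\Omega)$. For $k=1,2$ the limits are explicit: $\Lambda_1(\infty;\Omega)=1/r_1(\Omega)$, where $r_1$ is the inradius, and $\Lambda_2(\infty;\Omega)=1/r_2(\Omega)$, where $r_2$ is the largest radius of two disjoint equal balls in $\Omega$. For $k=3$ one has the upper bound $\limsup_{p\to\infty}\lambda_3(p;\Omega)^{1/p}\le 1/r_3(\Omega)$, where $r_3$ is the largest radius of three disjoint equal balls in $\Omega$. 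I would prove this bound directly, without relying on a full characterization.

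On the other side, \eqref{eq:lambda=lambda} gives $\lambda_{\boxbarl}(p;\mathcal{R}_1)=\lambda_1(p;\mathcal{R}_{1/2})$. Hence $\lambda_{\boxbarl}(p;\mathcal{R}_1)^{1/p}\to 1/r_1(\mathcal{R}_{1/2})=1/(1/4)=4$.

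For the upper bound on $\lambda_3$ I take three disjoint open balls $B_1,B_2,B_3\subset\mathcal{R}_1$ of common radius $r$ and set $\mathcal{A}$ to be the unit $L^p$-sphere of $\mathrm{span}\{\phi_1,\phi_2,\phi_3\}$. Here $\phi_i$ is the cone function $\phi_i(z)=(r-|z-c_i|)_+$, or the first $p$-eigenfunction of $B_i$. The set $\mathcal{A}$ is compact and symmetric with $\gamma(\mathcal{A})=3$. Because the supports are disjoint, for $u=\sum t_i\phi_i$ we get
$$R_p[u;\mathcal{R}_1]=\frac{\sum|t_i|^p\|\nabla\phi_i\|_p^p}{\sum|t_i|^p\|\phi_i\|_p^p}\le\max_i R_p[\phi_i;B_i].$$
For the cone, $R_p[\phi_i]^{1/p}=\|\nabla\phi_i\|_p/\|\phi_i\|_p\to 1/\|\phi_i\|_\infty=1/r$ as $p\to\infty$. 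This yields $\limsup_{p\to\infty}\lambda_3(p;\mathcal{R}_1)^{1/p}\le 1/r$ for every admissible $r$.

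It then suffices to exhibit three disjoint equal balls in the unit square with radius $r>1/4$. The configuration is: one ball in the corner at $(r,r)$, one at $(1-r,r)$, and one centered at $(1/2,1-r)$. The first two are disjoint iff $1-2r\ge 2r$, i.e.\ $r\le 1/4$, which is not good enough. So I use the known optimal packing of three equal circles in a square instead: centers at $(r,r)$, $(1-r,c)$ and $(c,1-r)$ arranged along the diagonal, with optimal radius $r_3\approx0.2543>1/4$. A concrete checkable choice is $r=0.252$ with centers $(r,r)$, $(1-r,\,r+s)$ and $(r+s,\,1-r)$, where $s$ is chosen so that all pairwise distances are at least $2r$. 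Verifying this is elementary arithmetic.

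Combining the two limits, $\limsup\lambda_3(p;\mathcal{R}_1)^{1/p}\le 1/r<4=\lim\lambda_{\boxbarl}(p;\mathcal{R}_1)^{1/p}$. Therefore $\lambda_3(p;\mathcal{R}_1)<\lambda_{\boxbarl}(p;\mathcal{R}_1)$ for all $p$ larger than some $\hat p$, which can be taken $\ge2$. The main obstacle is the margin: the gap $4$ versus $1/r_3\approx3.93$ is small, so the packing must be written down explicitly with verified pairwise distances. The limit for the first eigenvalue of a rectangle, $\lambda_1(p;\Omega)^{1/p}\to 1/r_1(\Omega)$, I would quote from Juutinen--Lindqvist--Manfredi. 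Only the lower-bound direction is needed, namely $\liminf\lambda_1(p;\mathcal{R}_{1/2})^{1/p}\ge 4$, and that part follows from the Lipschitz/distance-function argument in that paper.
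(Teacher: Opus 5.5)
Your proposal is correct and is essentially the paper's proof. Both arguments use $\lambda_{\boxbarl}(p;\mathcal{R}_1)^{1/p}=\lambda_1(p;\mathcal{R}_{1/2})^{1/p}\to 4$ from Juutinen--Lindqvist--Manfredi, and both bound $\lambda_3(p;\mathcal{R}_1)$ by the maximum of $R_p$ over the genus-three unit $L^p$-sphere spanned by the cone (distance) functions of three disjoint equal disks of radius $r>1/4$, where $R_p^{1/p}\to 1/r<4$. The only difference is cosmetic: you use an explicit packing with $r=0.252$, for which e.g.\ $s=0.1$ works (any $s\in[0.09,0.139]$ does), whereas the paper quotes Goldberg's optimal radius $(2+\sqrt{2}/2+\sqrt{6}/2)^{-1}\approx 0.2543$.
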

\begin{proof}
Thanks to \cite[Lemma~1.5]{juutinen} and \eqref{eq:lambda=lambda}, we have
\begin{equation}\label{eq:square0}
\lambda_{\boxbarl}^{1/p}(p;\mathcal{R}_1)
=
\lambda_{1}^{1/p}(p;\mathcal{R}_{1/2})
\to 
\frac{1}{r(\mathcal{R}_{1/2})}
=
4
\quad \text{as}~ p \to +\infty,
\end{equation}
where by $r(\Omega)$ we denote the inradius of a domain $\Omega$. 

It is known that the maximal radius of three disjoint disks packed in $\mathcal{R}_1$ equals $(2+\sqrt{2}/2+\sqrt{6}/2)^{-1} = 0.2543...$, see, e.g, \cite{goldberg}. 
Let us denote such disks as $B_1$, $B_2$, $B_3$, and let 
$$
\delta_i(x) = 
\begin{cases}
\text{dist}(x,\partial B_i) &\text{if}~ x \in B_i,\\
0 &\text{if}~ x \in \mathcal{R}_1 \setminus \overline{B_i},
\end{cases}
\qquad i=1,2,3.
$$
We have $|\nabla \delta_i| = 1$ a.e.\ in $\mathcal{R}_1$. 
Moreover, for any constants $c_1,c_2,c_3 \in \mathbb{R}$, the function $c_1 \delta_1 + c_2 \delta_2 + c_3 \delta_3$ is Lipschitz and belongs to $W_0^{1,p}(\mathcal{R}_1)$ for any $p>1$. 
Consider a set $\mathcal{A} \subset W_0^{1,p}(\mathcal{R}_1) \setminus \{0\}$ defined as 
$$
\mathcal{A} = \{v:~ v = c_1 \delta_1 + c_2 \delta_2 + c_3 \delta_3 ~\text{for some}~ (c_1,c_2,c_3) \in \mathbb{R}^3 \setminus \{0\},
~\text{and}~ \|v\|_p=1\}.
$$
This set is symmetric and compact.
The mapping $f: \mathcal{A} \mapsto \mathbb{R}^3 \setminus \{0\}$ defined as
$f(c_1 \delta_1 + c_2 \delta_2 + c_3 \delta_3) = (c_1,c_2,c_3)$
is continuous and odd, and hence $\gamma(\mathcal{A}) \geq 3$. 
It is then not hard to calculate that 
\begin{align}\label{eq:square0x}
\lambda_3^{1/p}(p;\mathcal{R}_1) \leq \sup_{v \in \mathcal{A}} R_p^{1/p}[v;\mathcal{R}_1]
=
\frac{1}{\left(\frac{1}{|B_i|} \int_{B_i} \delta_i^p \,dz \right)^{1/p}} 
\to \frac{1}{r(B_i)} = 3.9318...
\quad \text{as}~ p \to +\infty.~~
\end{align}
Comparing \eqref{eq:square0} and \eqref{eq:square0x}, we obtain the desired inequality \eqref{eq:square5}. 
\end{proof}

\bigskip

\noindent
\textbf{Acknowledgments.} 
The author is thankful to P.~Dr\'abek for a discussion on the history of the subject, and to L.~Brasco for comments.


\addcontentsline{toc}{section}{\refname}
\small
\begin{thebibliography}{99}

\bibitem{amann}
Amann, H. (1972). Lusternik-Schnirelman theory and non-linear eigenvalue problems. Mathematische Annalen, 199(1), 55-72.
\doi{10.1007/BF01419576}

\bibitem{anane}
Anane, A. (1988). Etude des valeurs propres et de la résonance pour l'opérateur $p$-Laplacien. PhD thesis.

\bibitem{ADS}
Anoop, T. V., Dr\'abek, P., \& Sasi, S. (2016). On the structure of the second eigenfunctions of the $p$-Laplacian on a ball. Proceedings of the American Mathematical Society, 144(6), 2503-2512.
\doi{10.1090/proc/12902}

\bibitem{barbatis-lamberti}
Barbatis, G., \& Lamberti, P. D. (2016). Monotonicity, continuity and differentiability results for the $L^p$ Hardy constant. Israel Journal of Mathematics, 215(2), 1011-1024.	
\doi{10.1007/s11856-016-1400-z}

\bibitem{bellonikawohl}
Belloni, M., \& Kawohl, B. (2002). A direct uniqueness proof for equations involving the $p$-Laplace operator. Manuscripta mathematica, 109(2), 229-231.
\doi{10.1007/s00229-002-0305-9}

\bibitem{bindingrynne1}	
Binding, P. A., \& Rynne, B. P. (2008). Variational and non-variational eigenvalues of the $p$-Laplacian. Journal of Differential Equations, 244(1), 24-39.
\doi{10.1016/j.jde.2007.10.010}

\bibitem{bobkovparini}
Bobkov, V., \& Parini, E. (2018). On the higher Cheeger problem. Journal of the London Mathematical Society, 97(3), 575-600.
\doi{10.1112/jlms.12119}

\bibitem{brasco-franzina-pathalogical}
Brasco, L., \& Franzina, G. (2017). A pathological example in nonlinear spectral theory. Advances in Nonlinear Analysis, 8(1), 707-714.
\doi{10.1515/anona-2017-0043}

\bibitem{browder1}
Browder, F.E. (1970). Nonlinear eigenvalue problems and group invariance. 
In Functional Analysis and Related Fields: Proceedings of a Conference in honor of Professor Marshall Stone, held at the University of Chicago, May 1968 (pp. 1-58). Berlin, Heidelberg: Springer.
\doi{10.1007/978-3-642-48272-4\_1}

\bibitem{cuesta}
Cuesta, M. (2000). On the Fu\v{c}\'ik spectrum of the Laplacian and $p$-Laplacian.
In Proceedings of the ``2000 Seminar in Differential Equations'', Kvilda (Czech Republic), 2000.
\url{http://www-lmpa.univ-littoral.fr/~cuesta/articles/kavilda0.pdf}

\bibitem{degiovannimarzocchi}
Degiovanni, M., \& Marzocchi, M. (2015). 
On the dependence on $p$ of the variational eigenvalues of the $p$-Laplace operator. Potential Analysis, 43(4), 593-609.
\doi{10.1007/s11118-015-9487-0}

\bibitem{diblasio-lamberti}
Di Blasio, G., \& Lamberti, P. D. (2020). Eigenvalues of the Finsler $p$‐Laplacian on varying domains. Mathematika, 66(3), 765-776.
\doi{10.1112/mtk.12042}

\bibitem{drabek-var}
Dr\'abek, P. (2012). On the variational eigenvalues which are not of Ljusternik‐Schnirelmann type. 
Abstract and Applied Analysis, 2012(1), 434631.
\doi{10.1155/2012/434631}

\bibitem{drabek-takac}
Dr\'abek, P., \& Tak\'a\v{c}, P. (2010). On variational eigenvalues of the $p$-Laplacian which are not of Ljusternik–Schnirelmann type. Journal of the London Mathematical Society, 81(3), 625-649.
\doi{10.1112/jlms/jdq006}


\bibitem{FNSS2}
Fu\v{c}\'ik, S., Ne\v{c}as, J., Sou\v{c}ek, J., \& Sou\v{c}ek, V. (1972). Upper bound for the number of critical levels for nonlinear operators in Banach spaces of the type of second order nonlinear partial differential operators. Journal of Functional Analysis, 11(3), 314-333.
\doi{10.1016/0022-1236(72)90072-9}

\bibitem{FNSS}
Fu\v{c}\'ik, S., Ne\v{c}as, J., Sou\v{c}ek, J., \& Sou\v{c}ek, V. (1973). Spectral analysis of nonlinear operators. Springer. 
\doi{10.1007/BFb0059360}

\bibitem{garciaperal}
Garc\'ia Azorero, J.P., \& Peral Alonso, I. (1987). Existence and nonuniqueness for the $p$-Laplacian. Communications in partial differential equations, 12(12), 126-202.
\doi{10.1080/03605308708820534}

\bibitem{garcia}
Garc\'ia Meli\'an, J., \& Sabina de Lis, J. (2001). On the perturbation of eigenvalues for the $p$-Laplacian. Comptes Rendus de l'Acad\'emie des Sciences-Series I-Mathematics, 332(10), 893-898.
\doi{10.1016/s0764-4442(01)01956-5}

\bibitem{goldberg}
Goldberg, M. (1970). The packing of equal circles in a square. Mathematics Magazine, 43(1), 24-30.
\doi{10.1080/0025570X.1970.11975991}

\bibitem{Horak}
Hor\'ak J. (2011). Numerical investigation of the smallest eigenvalues of the $p$-Laplace operator on planar domains. Electronic Journal of Differential Equations, 2011(132), 1-30. 
\url{https://ejde.math.txstate.edu/Volumes/2011/132/horak.pdf}

\bibitem{juutinen}
Juutinen, P., Lindqvist, P., \& Manfredi, J. J. (1999). The $\infty$-eigenvalue problem. 
Archive for rational mechanics and analysis, 148(2), 89-105.
\doi{10.1007/s002050050157}

\bibitem{kawohl}
Kawohl, B. (2011). Variations on the $p$-Laplacian. Nonlinear Elliptic Partial Differential Equations, Eds. D. Bonheure, P. Tak\'a\v{c} et al., Contemporary Mathematics, 540, 35-46.
\doi{10.1090/conm/540}

\bibitem{lind}
Lindqvist, P. (1993). On non-linear Rayleigh quotients. Potential Analysis, 2(3), 199-218.
\doi{10.1007/BF01048505}

\bibitem{parini-continuity}
Parini, E. (2011). Continuity of the variational eigenvalues of the $p$-Laplacian with respect to $p$. Bulletin of the Australian Mathematical Society, 83(3), 376-381.
\doi{10.1017/S000497271100205X}

\bibitem{szulkin}
Szulkin, A. (1988). Ljusternik--Schnirelmann theory on $C^1$-manifolds. Annales de l'Institut Henri Poincar\'e C, Analyse non lin\'eaire, 5(2), 119-139.
\doi{10.1016/S0294-1449(16)30348-1}

\bibitem{tolksdorf}
Tolksdorf, P. (1984). Regularity for a more general class of quasilinear elliptic equations. Journal of Differential equations, 51(1), 126-150. 
\doi{10.1016/0022-0396(84)90105-0}

\end{thebibliography}
\end{document}